\newcolumntype{Y}{>{\raggedright\arraybackslash}X}
\pgfplotsset{
	table/search path={plot_figures},
}
\pgfplotsset{compat=1.14}
\newcommand{\pubdesc}[2]{%
  \csname phantomsection\endcsname 
  \def\@currentlabel{#1}
  \textsc{#1} 
  #2
}
\def \<= {%
  \leqslant}%
\def \>= {%
  \geqslant}%
\newcommand*{\B}[1]{\ifmmode\bm{#1}\else\textbf{#1}\fi}
\newcommand{\s}[3]{\sum \limits_{#1= #2}^{#3}}
\newcommand{\minl}[1]{\min \limits_{ #1 } }
\newcommand{\suml}[1]{\sum \limits_{ #1 } }
\def\la{\langle}
\def\ra{\rangle}
\newcommand{\wt}[1]{\widetilde{ #1 } }
\def\e{\varepsilon}
\def\lm{\lambda}
\def\vp{\varphi}
\def\W{\mathcal W}
\def\Y{\mathcal Y}
\def\Z{\mathcal Z}
\def\M{\mathcal M_+^1}
\newcommand{\cW}{\mathcal{W}}
\def\R{\mathbb R}
\newcommand{\E}{{\mathbb E}}
\newcommand{\wdxi}{\widetilde{\xi}}
\newcommand{\wdlm}{\widetilde{\lambda}}
\newcommand{\wdPsi}{\widetilde{\Psi}}
\newcommand{\wtPsi}{\widetilde{\Psi}}
\def\Blm{\boldsymbol{\lambda}}
\def\BBlm{\boldsymbol{\bar{\lambda}}}
\def\b0{\boldsymbol{0}}
\def\b1{\boldsymbol{1}}
\def\tO{\widetilde{O}}
\def\blm{\bar{\lambda}}
\def\p{\mathtt{p}}
\def\l{\ell}
\def\1#1{%
  \mathbb{#1}}%
\def\2#1{%
  \mathbb{#1}[X]}%
\def\3#1#2{%
\mathbb{#1} / \mathbb{#2}
}
\def\4#1#2{%
\mathbb{#1}[#2]
}
\def\5#1{%
\mathfrak{#1}
}
\def\dfr#1#2{%
\dfrac{#1}{#2}
}
\def\l{%
\left
}
\def\r{%
\right
}
\definecolor{ao(english)}{rgb}{0.0, 0.5, 0.0}
\def \!= {%
  \neq}%
\def \<= {%
  \leq}%
\def \>= {%
  \geq}%
\DeclareMathOperator*{\argmin}{argmin}
\DeclareMathOperator*{\nz}{nnz}
\DeclareMathOperator*{\tgrad}{\widetilde{\nabla}}
\def\eig{\lambda_{\max}(W)}
\def\eigmax{\lambda_{\max}(W)}
\def\eigmin{\lambda_{\min}^+(W)}
\DeclareMathOperator{\nnznum}{nnz}
\def\nnz{\nnznum (\overline{W}_{i})}
\newtheorem{theorem}{Theorem}
\newtheorem{lemma}{Lemma}
\newtheorem{proposition}{Proposition}
\newtheorem{remark}{Remark}
\newtheorem{assumptions}{Assumptions}
\definecolor{darkgreen}{rgb}{0.0, 0.5, 0.0}
\author{%
  David S.~Hippocampus\thanks{Use footnote for providing further information
    about author (webpage, alternative address)---\emph{not} for acknowledging
    funding agencies.} \\
  Department of Computer Science\\
  Cranberry-Lemon University\\
  Pittsburgh, PA 15213 \\
  \texttt{hippo@cs.cranberry-lemon.edu} \\
}
\begin{document}

%

%

\twocolumn[

\aistatstitle{Distributed Optimization with Quantization for Computing Wasserstein Barycenters}

\aistatsauthor{ Roman Krawtschenko \And C\'esar A. Uribe \And  Alexander Gasnikov \And  Pavel Dvurechensky }

\aistatsaddress{ Humboldt University \\ \texttt{r.kravchenko90@gmail.com} \And  MIT \\\texttt{cauribe@mit.edu} \And MIPT,\\ IITP RAS \\ HSE University \\ \texttt{gasnikov@yandex.ru}  \And Weierstrass Institute,  \\ IITP RAS  \\ HSE University \\\texttt{dvureche@wias-berlin.de} }]


\begin{abstract}
We study the problem of the decentralized computation of entropy-regularized semi-discrete Wasserstein barycenters over a network. Building upon recent primal-dual approaches, we propose a sampling gradient quantization scheme that allows efficient communication and computation of approximate barycenters where the factor distributions are stored distributedly on arbitrary networks. The communication and algorithmic complexity of the proposed algorithm are shown, with explicit dependency on the size of the support, the number of distributions, and the desired accuracy. Numerical results validate our algorithmic analysis.

\end{abstract}

\vspace{-0.2cm}
\section{Introduction} \label{S:Intro}
\vspace{-0.2cm}

Optimal transport (OT) has become an important part of modern machine learning for its ability to take into account the geometry of the data for computations.  Applications range from image retrieval \cite{rubner2000earth} and image classification \cite{cuturi2013sinkhorn} to Generative Adversarial Networks \cite{arjovsky2017wasserstein}. An immediate use of Wasserstein distances is the definition of Frechet means of distributions~\cite{agueh2011barycenters,Panaretos2020FrechetMeans}, which is usually called the Wasserstein barycenter (WB)~\cite{Boissard2015,ebert2017construction}. Informally, WB allows us to define for example, an average image when interpreted as a discrete probability distribution. Such flexibility, along side the geometric and statistical properties of WB has led to a large number of applications, e.g., image morphing and image interpolation of natural images~\cite{simon2020barycenters}, averaging atmospheric gas concentration data~\cite{barre2020averaging}, graph representation learning \cite{simou2020node2coords}, fairness in ML \cite{chzhen2020fair}, geometric clustering \cite{mi2020variational}, Bayesian learning~\cite{rios2020wasserstein}, stain normalization and augmentation~\cite{nadeem2020multimarginal}, probability and density forecast combination~\cite{cumings2020probability}, multimedia analysis and fusion~\cite{jin2020multimedia}, unsupervised multilingual alignment~\cite{lian2020unsupervised}, clustering patterns for COVID-19 dynamics~\cite{nielsen2020clustering}, channel pruning~\cite{shen2020cpot}, and many others. 

As OT and, in particular, the WB framework is used in machine learning applications, the scale of the problem increases as well. Thus, the geometric and statistical advantages of WB come with a high computational cost. For example, calculating the WB of two images of one million pixels translates into a large-scale optimization problem, where the definition of the Wasserstein distance itself contains a minimization problem with approximately $10^{12}$ variables. 

Approximating WB with high accuracy requires processing a large number of samples to get good statistical estimates, and dependencies on the distribution support, number of distribution, and desired accuracy are usually high~\cite{borgwardt2019computational}. Therefore, the complexity and scalability of approximating WB is also a main research thrust within the ML community~\cite{fan2020scalable,lin2020computational,puccetti2020computation,lin2020computational}, where some heuristics~\cite{bouchet2020primal},  strongly‑polynomial 2‑approximation~\cite{borgwardt2020lp}, fast computation~\cite{guo2020fast,guminov2019acceleratedAM}, saddle-point \cite{tiapkin2020stochastic}, and proximal methods~\cite{stonyakin2019gradient,xie2020fast} have been proposed, as well as approaches based on multimarginal optimal transport \cite{2019arXiv191000152L,tupitsa2020multimarginal}.

In addition to scale, modern ML applications require other considerations. In particular, decentralized approaches~\cite{NIPS2017_6858} have recently emerged over centralized ones due to its ability to take into account data ownership, privacy, fault tolerance, and scalability, e.g., parameter server~\cite{dean2012large}, federated learning~\cite{konevcny2016federated,mcmahan2016federated,kairouz2019advances}, among others. In this paper, we are focused on: 	$\bullet$ \textit{Communication-efficient}, 	$\bullet$ Stochastic (\textit{semi-discrete}), and 	$\bullet$ \textit{Decentralized} computation of Wasserstein barycenters. \textit{Decentralized} because we assume it is not possible to store the whole dataset into one machine, and probability distributions are stored locally on a number of agents or workers connected over some arbitrary network. The network structure defines some limited communication capabilities between the nodes. \textit{Semi-discrete} because in contrast with most of the available WB computation approaches, we assume the base probability distributions are continuous, while we want to recover their best discrete (finite) barycenter.
Moreover, the semi-discrete setup implies that agents or machines are oblivious to their local probability distribution and can learn about it from the realization of an associated random variable. Finally, \textit{Communication-efficient} because we exploit the fact that the gradients of the associated dual formulation~\cite{Scaman2017Optimal,uribe2020dual} of the WB lie in the probability simplex, i.e., is a discrete distribution. Thus, we propose a quantized communication approach that drastically reduces the network's communication load by sampling from such a discrete distribution, creating a sparse approximation of the stochastic gradient. Applied to the WB problem, this means that instead of sending a whole barycenter estimate (e.g., an image, in each iteration), agents share a histogram from a finite number of samples drawn from its current WB estimate. 

The main \textbf{contributions} of this paper are:

\vspace{-0.5cm}
\begin{itemize}[leftmargin=*]
\setlength\itemsep{-0.4em}
    \item We propose a general quantized communication-based algorithm for the distributed computation of semi-discrete WB over networks.
    \item We analyze the communication and communication complexity of the proposed algorithm in two setups: 1) The number of samples from the distribution and the samples from the gradient are allowed to increase with time. 2) The number of samples is fixed to some predefined constant. 
    \item Our specific results for the computation of WB are based on a new general primal-dual analysis of the accelerated stochastic gradient descent method, for which we provide a new convergence rate and sample complexity analysis.
    \item We provide numerical experiments that empirically support our theoretical finding on the convergence of the proposed algorithm.
\end{itemize}

\vspace{-0.5cm}

\textbf{Related works:} 
Efficient gradient quantization was studied in~\cite{horvth2019gradientquant,koloskova2019decentralized,koloskova2019decentralized2}. However, only non-accelerated stochastic gradient approaches are available. A similar approach to ours was recently studied in~\cite{ballu2020stochastic}, which uses SGD and sampling scheme in the discrete setting. In contrast, we use AGM (Accelerated gradient method) instead of SGD (Stochastic gradient descent), and we study the WB problem in the semi-discrete setting and give precise computation and communication complexity in terms of the dimension $n$ of the barycenter space, the number of considered distributions $m$, and the network architecture. Centralized Semi-discrete WB~\cite{Claici2018StochasticWB} was studied in \cite{NIPS2017_6858}, where the convergence of the discrete approximations of the continuous measures was studied. Decentralized discrete WB communication and computation complexity was studied in~\cite{uribe2018distributed}, and their corresponding semi-discrete analysis is available in~\cite{dvurechensky2018decentralize}, both approaches with no quantization for communication efficiency. Empirical studies of the approximation of the fully continuous barycenter are recently available in~\cite{li2020continuous}. To the best of the authors' knowledge, the best estimate of the complexity of computing the WB in the centralized discrete setup is available in~\cite{2010.04677}.

This paper is organized as follows. Section~\ref{sec:Problem} introduces the problem of distributed WB computation. Section~\ref{sec:general} presents the general primal-dual accelerated gradient method. Section~\ref{sec:dual} analyzes the primal-dual properties of the WB problem. Section~\ref{sec:main} states our proposed algorithm and main results. Section~\ref{sec:numerics} shows the numerical results. Finally, Section~\ref{sec:Discussion} provides conclusions and future work.

\textbf{Notations:}
We denote $\mathcal{M}^1_+(\mathcal{X})$ as the set of positive Radon probability measures on a metric space $\mathcal{X}$.
$Y\sim \mu$ means that random variable $Y$ is distributed according to measure $\mu$. Let $e_{i}$ denote the $i$-th unit vector. Let $W$ be a positive semi-definite matrix with maximal and minimal nonzero eigenvalues denoted by $\eigmax$ and $\eigmin$. We denote the condition number of a matrix $W$ by $\chi(W)\triangleq {\eig}/{\eigmin}$. $ S_1(n) \triangleq \{ p \in \R^n : \sum_{ l= 1}^n p_l = 1\}$ denotes the probability simplex of dimension $n$. $\tilde{O} (\cdot )$ denotes the complexity up to polylogarithms.  $\delta$ denotes the Dirac mass and $[p]_i$ denote the $i$-th component of the vector $p$. $\otimes$ denotes the Kronecker product of matrices and $\odot$ denotes the component-wise vector multiplication.
$ \nz(x)$  denotes the number of non-zero elements of vector $x$. Given a matrix $\overline{W}$ and its rows $\overline{W}_i$ we write $\kappa (\overline{W}) \triangleq  \sum_{i=1}^m \nnz $.


\vspace{-0.2cm}
\section{Problem Statement and Results}\label{sec:Problem}
\vspace{-0.2cm}

In this section, we describe the problem of the distributed computation of the entropy-regularized semi-discrete Wasserstein barycenter. Moreover, we present a summary of the main results of this paper regarding the communication, sample, and arithmetic complexity of the proposed algorithm.

\begin{figure}[!]
    \centering
    \includegraphics[width=0.7\linewidth]{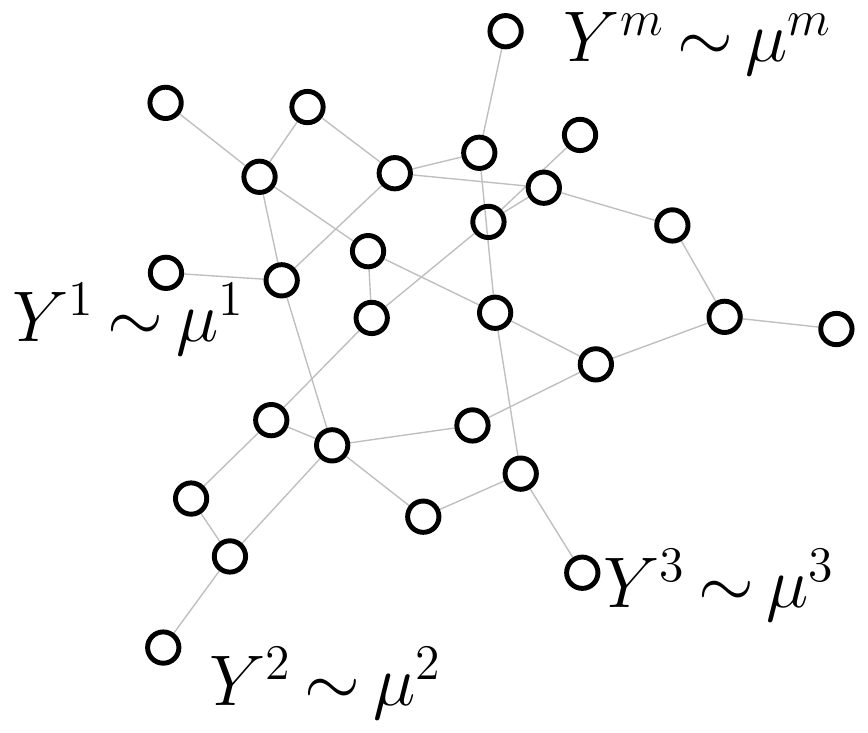}
    \caption{A network of agents, where each agent is able to sample from a different distribution, i.e., agent $i\in V$ samples from $\mu^i$. }
    \label{fig:network}
\end{figure}

Consider a network of $m$ agents defined over a connected, fixed, and undirected graph \mbox{$\mathcal{G} = (V,E)$}, where $V = \{1,\cdots,m\}$ is the set of agents and $E$ is the set of edges, such that $(i,j)\in E$ if agent $i$ and agent $j$ can communicate with each other. We assume that the graph $\mathcal{G}$ does not have self-loops. Moreover, at each time $k\geq 0$, each agent has access to \textit{realizations} or \textit{samples} (the number of samples will be specified later) from a stationary random process $\{Y^i_k\}_{k\geq 0}$ such that $Y^i_k \sim \mu^i$, where $\mu^i \in \mathcal{M}_{+}^1(\mathcal{Y})$ is a continuous probability distribution with with density $q^i(y)$ on a metric space $\mathcal{Y}$, see Figure~\ref{fig:network}.

The group of agents tries to collaboratively compute the entropy-regularized semi-discrete Wasserstein barycenter, defined as the discrete probability distribution $\nu$, on a fixed support $z_1, \dots, z_n \in \mathcal{Z}$, i.e., $\nu=\sum_{i=1}^n[p^*]_i\delta(z_i)$, and $p \in S_1(n)$ such that

\vspace{-0.6cm}

\begin{align}\label{eq:PrimPr}
p^* & = \argmin_{p \in S_1(n)} \sum\limits_{i=1}^{m} \omega_i \W_{\gamma,\mu_i}(p), 
\end{align}

\vspace{-0.2cm}

where $\{\omega_i\}_{i=1}^m$ is a set of non-negative weights with $\sum_{i=1}^m\omega_i=1$. Without loss of generality we set $\omega_i = 1/m$. Moreover, we denote $\W_{\gamma,\mu}(p) \triangleq \W_{\gamma}(\mu,p)$ as the fixed probability measure $\mu$, where $\W_{\gamma}(\mu,p)$ is the entropy-regularized semi-discrete Wasserstein distance:

\vspace{-0.7cm}

\begin{align}\label{WassDis}
\W_{\gamma}(\mu,p)=\min_{\pi \in \Pi(\mu,p)}\Bigg\{ \sum_{i=1}^n\int_{\Y} c_i(y)\pi_i(y)dy + \nonumber  \\
\gamma \sum_{i=1}\limits^n\int_{\Y}
\pi_i(y)\log\left(\frac{\pi_i(y)}{\xi}\right)dy\Bigg\},
\end{align}

\vspace{-0.4cm}

where $\xi$ is the density of uniform distribution on $\Y \times \mathcal{Z}$, $c_i(y) = c(z_i,y)$ denotes the cost of transporting a unit of mass from point $z_i \in \mathcal{Z}$  to point $y \in \mathcal{Y}$, and $\Pi(\mu,p)$ is the set of admissible coupling measures $\pi$ defined as

\vspace{-0.6cm}

\begin{align*}
  \Pi(\mu,p) = \Bigg\{\pi \in\M(\Y) \times S_1(n) &:  \\
  \sum_{i=1}^n \pi_i(y) = q(y), y \in
\mathcal{Y}  &, \int_{\mathcal{Y}} \pi_i(y)dy = p_i  \Bigg\}
\end{align*}

\vspace{-0.2cm}

and $\gamma$ is the regularization parameter.

Note that unlike \cite{genevay2016stochastic}, we regularize the problem by the Kullback-Leibler divergence from the uniform distribution $\xi$, which allows us to find explicitly the Fenchel conjugate for $\W_{\gamma}(\mu,p)$, to be defined later in Lemma~\ref{lm:variance_stoch_gradient}.

We are interested in the distributed computation of approximate solutions of~\eqref{eq:PrimPr}, such that, each agent finds an approximate distribution $\tilde p_i$, where
\begin{equation}   \label{eq:Precise Wasserstein Barycenter Approximation}
  \frac{1}{m}\suml{i=1}^{m}  \cW_{\gamma} ( \mu_{i} , \tilde p_i ) -
 \frac{1}{m}\suml{i=1}^{m}  \cW_{\gamma}  ( \mu_{i} , p^{*} ) \<= \e,
\end{equation}
and $ \tilde p_1,\cdots,\tilde p_m$ are close to each other in a sense to be defined soon. 
Also, note that with an appropriate selection of the regularization parameter $\gamma$, one can also compute an approximate solution to the non-regularized Wasserstein barycenter as described in the next proposition.

\begin{proposition}\label{prop:volume}
  Assume that the space $\Y \times \mathcal{Z}$ is compact with the volume $\Omega \triangleq {\rm Vol}(\Y \times \mathcal{Z})$ and there exist $\tilde p$ satisfying \eqref{eq:Precise Wasserstein Barycenter Approximation} with $\gamma = \hat{\gamma}\triangleq {\e}/{(4 \log(\Omega) )}$.
  Then
\begin{align}
  \label{eq:16}
  \frac{1}{m}\suml{i=1}^{m}  \cW_0 ( \mu_{i} , \tilde p) -
 \frac{1}{m} \suml{i=1}^{m}  \cW_0 ( \mu_{i} , p^{*} ) \<= 2\e.
\end{align}
\end{proposition}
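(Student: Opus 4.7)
The plan is to establish a pointwise ``sandwich'' between $\cW_\gamma$ and $\cW_0$ and then chain it with the hypothesis.

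\textbf{Step 1 (Pointwise sandwich).} I would first prove that for every $\mu$ and every $p \in S_1(n)$,
\begin{equation*}
\cW_0(\mu, p) \;\le\; \cW_\gamma(\mu, p) \;\le\; \cW_0(\mu, p) + 2\gamma \log \Omega.
\end{equation*}
The left inequality is immediate: since $\xi = 1/\Omega$ is the density of a probability measure on $\Y\times\Z$, Gibbs' inequality gives $\int \pi\log(\pi/\xi)\,dy = D_{\rm KL}(\pi\|\xi)\ge 0$ for every feasible coupling $\pi$, so adding the regularizer can only raise the infimum. For the right inequality, I would substitute a feasible test coupling with controlled KL from $\xi$ into the regularized objective. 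A natural choice is the product coupling $\pi(y, z_i) = q(y)\,p_i \in \Pi(\mu,p)$, for which a direct computation yields
\begin{equation*}
\int \pi \log(\pi/\xi)\,dy = -H(q) - H(p) + \log \Omega,
\end{equation*}
and the compactness assumption on $\Y\times\Z$ (with total volume $\Omega$) bounds this by a constant multiple of $\log\Omega$. Equivalently, using the explicit Fenchel dual of $\cW_\gamma$ promised in Lemma~\ref{lm:variance_stoch_gradient}, the gap $\cW_\gamma - \cW_0$ is exactly the gap between a $\gamma$-soft-max and a hard max, which is at most $\gamma\log n$ on the discrete side and $\gamma\log\mathrm{Vol}(\Y)$ on the continuous side, together summing to $\gamma\log\Omega$ up to the stated factor.

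\textbf{Step 2 (Chaining).} Armed with the sandwich, the proposition reduces to a one-line chain. Using the left inequality on $\tilde p$, the hypothesis \eqref{eq:Precise Wasserstein Barycenter Approximation}, and the right inequality on $p^*$,
\begin{align*}
\tfrac{1}{m}\sum_i \cW_0(\mu_i,\tilde p)
&\le \tfrac{1}{m}\sum_i \cW_\gamma(\mu_i,\tilde p) \\
&\le \tfrac{1}{m}\sum_i \cW_\gamma(\mu_i, p^*) + \varepsilon \\
&\le \tfrac{1}{m}\sum_i \cW_0(\mu_i, p^*) + 2\gamma\log\Omega + \varepsilon.
\end{align*}
Plugging in $\gamma = \hat\gamma = \varepsilon/(4\log\Omega)$ makes $2\gamma\log\Omega = \varepsilon/2$, so the excess over $\tfrac{1}{m}\sum_i \cW_0(\mu_i, p^*)$ is at most $3\varepsilon/2 \le 2\varepsilon$, as claimed. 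Note that $p^*_0 \in \argmin \frac{1}{m}\sum_i \cW_0(\mu_i,\cdot)$ never needs to enter the chain.

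\textbf{Main obstacle.} Step 2 is routine; the real content lies in the upper half of Step 1. In the semi-discrete setting the continuous marginal $q$ may have arbitrarily negative differential entropy, so the naive product-coupling argument cannot work without either an extra regularity hypothesis on $q$, a mollification of the optimal plan, or the Fenchel-dual soft-max route (where the $\log n$ factor is immediate and compactness of $\Y$ cleanly contributes the remaining $\log\mathrm{Vol}(\Y)$ to yield the $\log\Omega$ constant).
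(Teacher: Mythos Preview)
Your proposal is correct and follows essentially the same strategy as the paper: establish the sandwich $\cW_0 \le \cW_\gamma \le \cW_0 + O(\gamma\log\Omega)$ and then chain it with the hypothesis~\eqref{eq:Precise Wasserstein Barycenter Approximation}. If anything you are more careful than the paper, which simply asserts the upper bound via an informal ``Dirac mass'' argument, whereas you correctly flag that in the semi-discrete setting the KL term need not be uniformly bounded and propose the Fenchel-dual soft-max route as the clean fix.
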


\begin{remark}
Formally, the $\rho$-Wasserstein distance for $\rho \geq 1$ is $\left({\W_0(\mu,\nu)}\right)^{\frac{1}{\rho}}$ if $\Y=\Z$ and $c_i(y) = d^{\rho}(z_i,y)$, $d$ being a distance on $\Y$.
For simplicity, we refer to~\eqref{WassDis} as regularized Wasserstein distance. However, we want to emphasize that our algorithm does not rely on any specific choice of cost $c_i(y)$ and can be used for the Monge-Kantorovich distances where the transportation costs do not have to satisfy the metric properties.
\end{remark}

The objective of the group of agents is to solve~\eqref{eq:PrimPr}. However, the distributed structure induced by the network $\mathcal{G}$ imposes communication constraints we need to consider. We assumed that at each time $t\geq 0$, each agent $i\in V$ has access to the samples from the distribution $\mu^i$ only, i.e., agent $i$ has no information about $\mu^j$ for $j\neq i$. Therefore, cooperation is needed. Such cooperation is modeled as the ability of the agents to share information over the set of edges $E$.


 \begin{table*}[t!]
\label{results table}
  \centering
   \caption{A summary of complexity bounds for \mbox{\textbf{Q-DecPDSAG}}. Constant and logarithmic terms are hidden for notation convenience.  }

  \resizebox{0.8\linewidth}{!}{
    \begin{tabular}{cccc}
    \toprule
    \multirow{2}{6em}{\textbf{Complexity}}       & \textbf{DecPDSAG} &  \multicolumn{2}{ c }{\textbf{Q-DecPDSAG}}\\
             & \cite{dvurechensky2018decentralize}  & \textbf{Increasing Batch} & 
                \textbf{Constant Batch}
                \\
                \midrule
\makecell{ Communication \\ Rounds}
      & $\dfr{\sqrt{\chi (W) n} }{\e} $ & $\dfr{\sqrt{\chi (W) n}}{\e}  $  & $\dfr{ \chi (W) n}{\e^2}  $
      \\  \midrule

\makecell{Total Arithmetic \\ Operations  }
&$ \chi (W) \max \{ \dfr{m n^{2}}{\e^{2}} ,
\dfr{n\kappa (W) }{\e^{2}}\} $     &
$\chi (W) \max \{ \dfr{m n^{2}}{\e^{2}} ,
\dfr{n \kappa (W)}{\e^{2}}\}$
&
 $ \dfr{\chi (W) n^{2} \kappa (W) }{\e^{2}}   $
\\
\midrule
      \makecell{Total Number of \\ bits sent}
      &
      $    \dfrac{\sqrt{\chi (W) } m n^{3/2}}{ \e}  $
          &  $ \sqrt{\chi (W) } 
\kappa (W)  \max \{
     \dfrac{ n^{3/2} }{\e},  \dfr{n   }{\e^{2}} \}
$
          &
           $                                                                                                                \dfr{\chi (W)   n  \kappa (W) }{\e^{2}}$
            \\  \midrule
\makecell{Total Arithmetic \\ Operations  \\ expander graph}
&$  \dfrac{  m n^2 }{\e^{2} }$     &    $ \dfrac{  m n^2 }{\e^{2} } $
&
 $\dfr{m n^{2}}{\e^2}  $
\\
\midrule
      \makecell{Total Number of \\ bits sent \\  expander graph}
      &
      $    \dfrac{ m n^{3/2}}{ \e}  $
          &  $
     \max \{ \dfrac{m n^{3/2} }{\e} , \dfr{m n   }{\e^{2}} \}
            $
          &
           $\dfr{ m n}{\e^2}  $
      
      \\
      \midrule
      \makecell{Number of bits sent \\at iteration $k$ }
&$ n$          &    $\max\{n,k\} $    &
$ M $
               \\ \midrule
      \makecell{Total Number \\ of Samples}
      &  $ \dfrac{\sqrt{ \chi (W)}  mn }{    \e^{2}}  $ & $ \dfrac{\sqrt{ \chi (W)}  mn }{    \e^{2}}  $ &  $ \dfrac{ \chi (W) m n }{    \e^{2}}  $
      \\
        \bottomrule
    \end{tabular}
}
  \label{tab:results_table}
  \end{table*}

In Table~\ref{tab:results_table}, we informally summarize the main results of this paper. We also describe the communication and the computational complexity of the proposed algorithm, quantized distributed primal-dual stochastic accelerated gradient method \mbox{\textbf{Q-DecPDSAG}}. We compare the obtained bounds with the Non-quantized version denoted \textbf{DecPDSAG}~\cite{dvurechensky2018decentralize}. The specifics of the proposed quantization scheme will be described in Section \ref{sec:quantized_gradients}. Moreover, we compare two different regimes for \mbox{\textbf{Q-DecPDSAG}}, namely, having an increasing number of samples per iteration and having a constant number of samples per interaction. We analyze the computational complexity of \mbox{\textbf{Q-DecPDSAG}} in terms of arithmetic operations and the total communication complexity, as the total number of coordinates sent per node over all the iterations of the algorithm for the general graph as well as for the specific architecture where we use expander graphs.

In the next section, we study the primal-dual properties of the WB problem. We specifically exploit the dual problem structure and propose an effective sampling strategy for the construction of estimates of the stochastic gradient. This will be the main building block of our algorithm.

\vspace{-0.2cm}
\section{Duality and Quantization }\label{sec:dual}
\vspace{-0.2cm}

In this Section, we build upon the dual properties of the entropy-regularized Wasserstein distance to develop a distributed algorithm that allows for the computation of a Wasserstein barycenter.

\vspace{-0.2cm}
\subsection{Duality of the WB problem}
\vspace{-0.2cm}

A commonly used method to introduce the distributed structure of the problem is by reformulating~\eqref{eq:PrimPr} to take into account the constraints imposed by the graph $\mathcal{G}$ explicitly. To do so, we define the Laplacian matrix $\bar W{\in \mathbb{R}^{m\times m}}$ of the graph $\mathcal{G}$ as $[\bar W]_{ij} \triangleq -1$, $(i,j) \in E$, $[\bar W]_{ij} =\text{deg}(i)$ if $i= j$, and $0$ otherwise, where $\deg(i)$ is the number of neighbors of node $i$, i.e., the degree of the node $i$. Similarly, we define the auxiliary matrix $W \triangleq \bar W \otimes I_n$, which takes into account the dimension $n$ of the decision variable. Therefore, assuming that the graph $\mathcal{G}$ is undirected and connected, $\bar W$ is symmetric and positive semidefinite. Moreover, the matrix $W$ inherits the spectral properties of $\bar W$. More importantly, the Laplacian matrix has the property: $ W{\mathtt{p}} = 0  \;  \; \text{if and only if} \; \;  {p_1 = \cdots = p_m}$,
where $\mathtt{p} = [p_1^T,\cdots,p_m^T]^T \in \mathbb{R}^{mn}$.
The relation $\sqrt{W}{\mathtt{p}} = 0  \; \;\text{if and only if}  \; \; {p_1 = \cdots = p_m}$ also holds~\cite{dvurechensky2018decentralize}. Thus, we equivalently rewrite problem~\eqref{eq:PrimPr} as

\vspace{-0.8cm}

    \begin{align}\label{eq:DualiazationProcces}
    \max_{\substack{p_1,\dots, p_m \in S_1(n) \\ \sqrt{W} \mathtt{p}=0 }} ~ - \frac{1}{m}\sum\limits_{i=1}^{m}  \W_{\gamma, \mu_i}(p_i) .
    \end{align}
    
   \vspace{-0.2cm}

Given that~\eqref{eq:DualiazationProcces} is an optimization problem with linear constraints, we introduce a vector of dual variables $\Blm = [\lambda_1^T,\cdots,\lambda_m^T]^T \in \R^{mn}$ for the constraints $\sqrt{W}\p=0$. Then, the Lagrangian dual problem for \eqref{eq:PrimPr} can be written as 

\vspace{-0.8cm}

\begin{align}\label{eq:DualProblem}
    \min_{\Blm
\in \R^{mn}} \W_{\gamma}^*(\Blm) \triangleq \frac{1}{m}\sum_{i=1}^{m} \W^*_{\gamma, \mu_i}(m[\sqrt{W}\Blm]_i),
\end{align}

\vspace{-0.2cm}

where $\W^*_{\gamma,\mu_i}(\cdot)$ is the Fenchel-Legendre transform of $\W_{\gamma,\mu_i}(p_i)$ defined as

\vspace{-0.5cm}

{\small 
\begin{align*}
    \W^*_{\gamma, \mu_i}(m[\sqrt{W}\Blm]_i) & {=} \max_{p_i\in S_1(n)} \big\lbrace \langle \lambda_i, [\sqrt{W}\mathtt{p}]_i\rangle-\frac{1}{m}\W_{\gamma, \mu_i}(p_i)\big\rbrace,
\end{align*}
}

\vspace{-0.5cm}

and $[\sqrt{W}\p]_i$ and $[\sqrt{W}\Blm]_i$ denote the $i$-th $n$-dimensional block of vectors $\sqrt{W}\p$ and $\sqrt{W}\Blm$ respectively. 

The next two auxiliary lemmas state properties of $\W^*_{\gamma, \mu_i}(\cdot)$ what will be useful for our analysis~~\cite{dvurechensky2018decentralize}. In particular,  $\W^*_{\gamma, \mu_i}(\cdot)$ is a smooth function with Lipschitz-continuous gradient and can be expressed as an expectation of a  function of additional random argument.

\begin{lemma}[Lemma $1$ in \cite{dvurechensky2018decentralize}] \label{lm:Lipschitz_gradient} 
Given a positive Radon probability measure  $\mu \in \mathcal{M}_{+}^1(\mathcal{Y})$  with density $q(y)$  on a metric space $\mathcal{Y}$, the Fenchel-Legendre dual function of $\W_{\gamma,\mu}(p)$ can be written as

\vspace{-0.6cm}

\begin{align*}
\W_{\gamma,\mu}^*(\blm) {=}\E_{Y\sim\mu}\Bigg[ \gamma
\log\Bigg(\frac{1}{q(Y)}\sum_{\ell=1}^n\exp\Bigg(\frac{[\blm]_\ell{-}c_\ell(Y)}{\gamma}
\Bigg)\Bigg)\Bigg].
\end{align*}

\vspace{-0.2cm}

Moreover, $\W_{\gamma,\mu}^*(\blm)$ has $m/\gamma$-Lipschitz gradients w.r.t. $2$-norm, and its $l$-th coordiante, for $l=1,\dots,n$,  is defined as
 \begin{align*}
[\nabla \W_{\gamma,\mu}^*(\bar{\lambda})]_l & = \E_{Y\sim\mu}
 \Bigg[ \frac{\exp(([\bar{\lambda}]_l-c_l(Y))/\gamma)
}{\sum_{\ell=1}^n\exp(([\bar{\lambda}]_\ell-c_\ell(Y))/\gamma)}\Bigg].
\end{align*}
\end{lemma}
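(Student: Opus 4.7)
The plan is to compute the Fenchel conjugate directly from its definition $\W^*_{\gamma,\mu}(\blm) = \sup_{p \in S_1(n)} \{\langle \blm, p\rangle - \W_{\gamma,\mu}(p)\}$ by unfolding the inner minimization that defines $\W_{\gamma,\mu}(p)$. Substituting in the definition of $\W_{\gamma,\mu}$ (which itself involves a min over $\pi$) and flipping the sign gives a joint supremum over $(p,\pi)$ subject to the row constraint $\sum_i \pi_i(y) = q(y)$ and the marginal constraint $\int \pi_i(y)\,dy = p_i$. The key observation is that $p$ enters the problem only through the linear term $\langle \blm, p\rangle$ and the marginal constraint; using the latter to eliminate $p$ rewrites $\langle \blm, p\rangle$ as $\sum_i \blm_i \int \pi_i(y)\,dy$ and drops $p$ from the problem, leaving only the pointwise-in-$y$ constraint $\sum_i \pi_i(y) = q(y)$ (the condition $\sum_i p_i = 1$ is automatic since $\int q(y)\,dy = 1$).

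Next I would solve the reduced problem pointwise in $y$. At each $y$, a Lagrangian with scalar multiplier $\nu(y)$ for the constraint $\sum_i \pi_i(y) = q(y)$ yields, via first-order conditions, the softmax form $\pi_i(y) = q(y)\exp((\blm_i - c_i(y))/\gamma)/\sum_\ell \exp((\blm_\ell - c_\ell(y))/\gamma)$, with $\nu(y)$ pinned down by normalization. Substituting back, the linear terms $\sum_i (\blm_i - c_i(y))\pi_i(y)$ combine with the entropy regularizer $-\gamma\sum_i \pi_i(y)\log(\pi_i(y)/\xi)$ to collapse to a log-sum-exp expression in $\blm_\ell - c_\ell(y)$ weighted by $q(y)$. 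Integrating over $y$ and rewriting the integral against $q$ as an expectation over $Y\sim\mu$ produces the stated closed form, up to an additive constant of the form $\gamma\log\xi$ that does not affect either the gradient or the dual optimization problem.

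The gradient formula then follows by differentiating under the expectation (justified by, e.g., compactness of $\Y$ and continuity of $c_i$, which give uniform integrable bounds on the softmax weights). For the Lipschitz estimate, a direct calculation shows that the $\blm$-Hessian of the integrand equals $\gamma^{-1}(\diag(s(y)) - s(y)s(y)^T)$, where $s(y) \in S_1(n)$ is the softmax vector at $y$; this matrix is the covariance of a categorical distribution on $n$ atoms, so it is PSD with operator norm bounded by its trace, which lies in $[0,1]$. Taking the expectation preserves the $(1/\gamma)$ operator-norm bound on the Hessian, and combining this with the weighting convention used in the surrounding dual problem definition~\eqref{eq:DualProblem} yields the claimed $m/\gamma$-Lipschitz gradient.

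The main delicate point will be justifying the various exchanges involved: swapping $\sup_p$ with the inner optimization over $\pi$, interchanging differentiation and expectation, and ensuring that the pointwise Lagrangian multiplier $\nu(y)$ can be chosen measurably in $y$ so that the resulting optimal coupling is a genuine admissible plan. These are routine under mild regularity assumptions (compactness of $\Y\times\Z$, continuity of the costs $c_i$, strict positivity and integrability of the density $q$), but need to be checked cleanly before the closed-form expression can be asserted.
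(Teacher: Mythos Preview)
Your approach is correct and is the standard route to this result: eliminate $p$ via the marginal constraint, solve the remaining entropy-regularized linear problem in $\pi$ pointwise in $y$ (which gives the softmax form), substitute back to obtain the log-sum-exp, and then differentiate under the expectation and bound the Hessian via the categorical covariance to get the Lipschitz constant.

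However, note that the present paper does \emph{not} actually prove this lemma: it is imported verbatim as Lemma~1 of \cite{dvurechensky2018decentralize}, and no argument appears in either the main text or the appendix. So there is no ``paper's own proof'' to compare against here; your write-up would in fact be more complete than what the paper provides. One small point worth tightening: as you implicitly noticed, the intrinsic Lipschitz constant of $\W_{\gamma,\mu}^*$ in $\blm$ that your Hessian bound produces is $1/\gamma$, not $m/\gamma$; the extra factor $m$ only arises once one composes with the rescaling $\blm_i = m[\sqrt{W}\Blm]_i$ used in~\eqref{eq:DualProblem}. Your parenthetical about ``the weighting convention'' is the right explanation, but it would be cleaner to state explicitly that the $m/\gamma$ in the lemma as written already anticipates this rescaling rather than being a property of $\W_{\gamma,\mu}^*$ as a function on $\R^n$.
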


\begin{lemma}[Lemma $2$ in \cite{dvurechensky2018decentralize}]\label{Lm:dual_obj_properties2}
  The function $\W_{\gamma}^*(\Blm)$ in~\eqref{eq:DualProblem} has $ m \lambda_{\max}(W) /
  \gamma$-Lipschitz gradients w.r.t. $2$-norm. Moreover,  for $l=1,\dots,n$, it holds that
  \begin{align}\label{eq:DualGrad}
\left[\nabla \W^*_\gamma (\Blm) \right]_l & = \sum_{j=1}^{m}\sqrt{W}_{lj} \nabla \W_{\gamma,\mu_j}^*(\blm_j),
\end{align}
where we denoted $\blm_j = m[\sqrt{W}\Blm]_j$.
\end{lemma}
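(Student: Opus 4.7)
The plan is to establish both claims by recognizing $\W_\gamma^*(\Blm)$ as the composition of the single linear map $\Blm \mapsto m\sqrt{W}\Blm$ with the separable function $(\blm_1,\ldots,\blm_m) \mapsto \frac{1}{m}\sum_{j=1}^m \W^*_{\gamma,\mu_j}(\blm_j)$, and then invoking Lemma~\ref{lm:Lipschitz_gradient} for the per-agent pieces.

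First I would apply the chain rule to derive the gradient formula. Since $W = \bar{W}\otimes I_n$, the derivative of the $j$-th inner argument $\blm_j = m[\sqrt{W}\Blm]_j$ with respect to the $l$-th block of $\Blm$ is $m(\sqrt{W})_{jl} I_n$. The prefactor $1/m$ in front of the sum cancels the explicit $m$ produced by the chain rule, and the symmetry of $\sqrt{W}$ converts $(\sqrt{W})_{jl}$ into $(\sqrt{W})_{lj}$, recovering~\eqref{eq:DualGrad}.

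For the Lipschitz bound, I would write the gradient compactly as $\nabla\W_\gamma^*(\Blm) = \sqrt{W}\, G(\Blm)$, where $G(\Blm)\in\mathbb{R}^{mn}$ is the block vector whose $j$-th block equals $\nabla\W^*_{\gamma,\mu_j}(\blm_j)$. The norm of the gradient difference then factors as the operator norm $\|\sqrt{W}\|$ times $\|G(\Blm)-G(\Blm')\|$, contributing one factor of $\sqrt{\lambda_{\max}(W)}$. Next, the separable structure of $G$ lets me expand $\|G(\Blm)-G(\Blm')\|^2$ as a sum of per-block squared norms, each bounded via Lemma~\ref{lm:Lipschitz_gradient}. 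Finally, the identity $\sum_j \|\blm_j - \blm_j'\|^2 = m^2\|\sqrt{W}(\Blm-\Blm')\|^2$ requires a second application of the operator-norm bound on $\sqrt{W}$, producing another factor $\sqrt{\lambda_{\max}(W)}$; collecting all factors yields the announced Lipschitz constant $m\lambda_{\max}(W)/\gamma$.

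The main subtlety, and the step I expect to require the most care, is the bookkeeping of the factors of $m$ and of $\sqrt{\lambda_{\max}(W)}$ through the two chain-rule applications and the two operator-norm bounds. Because $\sqrt{W}$ appears both in the outer linear composition and inside each $\blm_j$, it contributes $\lambda_{\max}(W)$ rather than merely $\sqrt{\lambda_{\max}(W)}$ to the final constant; missing this doubling would yield a wrong dependence on the graph spectrum in every downstream complexity bound. Beyond this accounting, the argument is a standard composition of calculus and operator-norm facts built on top of Lemma~\ref{lm:Lipschitz_gradient}.
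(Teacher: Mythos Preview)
Your approach is correct and is exactly the standard argument: view $\W_\gamma^*$ as the composition of the linear map $\Blm\mapsto m\sqrt{W}\Blm$ with the separable sum $\frac{1}{m}\sum_j \W^*_{\gamma,\mu_j}$, apply the chain rule for the gradient formula, and for the Lipschitz constant use the operator-norm bound $\|\sqrt{W}\|_2=\sqrt{\lambda_{\max}(W)}$ twice (once on the outer $\sqrt{W}$ in $\nabla\W_\gamma^*=\sqrt{W}G$, once when converting $\sum_j\|\blm_j-\blm_j'\|^2$ back to $\|\Blm-\Blm'\|^2$). The paper does not give its own proof of this lemma at all; it simply quotes the result from~\cite{dvurechensky2018decentralize}, so there is nothing in the present paper to compare against, and your write-up would in fact supply a proof the paper omits.

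One bookkeeping point worth flagging, since you explicitly say the factor-tracking is the delicate step: Lemma~\ref{lm:Lipschitz_gradient} as printed in this paper states that $\W^*_{\gamma,\mu}$ has $m/\gamma$-Lipschitz gradient, but the single-agent conjugate has no dependence on $m$; the gradient is a softmax with temperature $\gamma$, whose Lipschitz constant is $1/\gamma$. If you plug the printed constant $m/\gamma$ into your chain-rule computation you will end up with $m^2\lambda_{\max}(W)/\gamma$, one factor of $m$ too many. Using the correct per-agent constant $1/\gamma$, your argument yields
\[
\|\sqrt{W}\|_2 \cdot \frac{1}{\gamma}\cdot m\,\|\sqrt{W}\|_2
\;=\;\frac{m\,\lambda_{\max}(W)}{\gamma},
\]
matching the claim of Lemma~\ref{Lm:dual_obj_properties2}. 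So your plan is sound; just be aware that the apparent $m/\gamma$ in Lemma~\ref{lm:Lipschitz_gradient} is a typo you should not propagate.
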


\vspace{-0.2cm}


Lemma~\ref{Lm:dual_obj_properties2} states that the \emph{dual} problem~\eqref{eq:DualProblem} is a smooth stochastic convex optimization problem. This is in contrast to~\cite{lan2017communication}, where the primal problem is a stochastic optimization problem. This will be the main observation that will allow us to propose an efficient sampling and communication strategy to find an approximate solution of \eqref{eq:DualProblem}, and~\eqref{eq:PrimPr} respectively. More importantly, the gradient of the dual function can be computed in a distributed manner over a network, where each agent $j \in V$ computes $\nabla \W_{\gamma,\mu_j}^*(\blm_j)$ using local information only. Then, the gradient is shared with neighbors following the graph topology, and a full gradient step can be taken.

Next, we describe our sampling and quantization approach. We take advantage of the smooth stochastic form of the dual problem, for which each agent can obtain an estimate of the gradient-based on samples from the random variables $Y^i$. Moreover, such an approximate gradient will be a probability distribution itself. Thus, instead of communicating the full gradient, an agent can share a histogram of samples of the approximate gradient itself.

\vspace{-0.2cm}
\subsection{Quantized stochastic gradients}\label{sec:quantized_gradients}
\vspace{-0.2cm}

We build our main result based on the idea of double sampling of the gradients. In~\cite{dvurechensky2018decentralize}, the authors propose to use sampling from measures $\mu_i$ to approximate the gradients of the dual problem. Given that each agent $i$ can obtain at each iteration $M_1$ samples of the random variable $Y_k^i \sim \mu_i$, one can define an approximate gradient as 
\begin{align}\label{eq:approx_grad}
    \widehat{\nabla} \W_{\gamma,\mu_j}^*(\blm_j) & = \frac{1}{M_1}\sum_{r=1}^{M_1} p_j(\blm_j , Y_r^j),
\end{align}
where, for all $l=1,\cdots,n$,
\begin{align}\label{eq:pes}
    [p_j(\blm_j, Y_r^j)]_l & {=}
\frac{\exp(([\bar{\lambda}_j]_l-c_l(Y_r^j))/\gamma)
}{\sum_{k=1}^n\exp(([\bar{\lambda}_j]_{k}-c_k(Y_r^j))/\gamma)}.
\end{align}

\vspace{-0.2cm}

However, such an approximate gradient will be a vector of dimension $n$. For the case where $n$ is large, this might be prohibitively expensive in terms of communications. Thus, we take advantage of the fact that $\widehat{\nabla} \W_{\gamma,\mu}^*(\bar{\lambda}) \in S_1(n)$, to propose a strategy that guarantees lower communication costs. We propose to use the approximate stochastic gradient defined in the next lemma.

\begin{lemma}\label{lm:variance_stoch_gradient}
Let each agent $i \in V$, at iteration $k$, take $M_{i,1}$ samples of the random variable $Y^i$, and build a stochastic gradient as defined in~\eqref{eq:approx_grad}. Now, let each agent $i \in V$ take $M_{i,2}$ samples of a discrete random variable $\xi$ on $l \in \{1,...,n\} $ with $\xi = l$ with probability $[\widehat{\nabla} \W_{\gamma,\mu_j}^*(\blm_j)]_l$, and construct a histogram, or approximate quantized sampled gradient of its local dual function 
as $ 
       \widetilde{\nabla} \W_{\gamma,\mu_j}^*(\blm_j) = \frac{1}{M_{i,2}}\sum_{r=1}^{M_{i,2}}
e_{\xi_r}$, which are then combined in the stochastic approximation for the whole dual objective \eqref{eq:DualProblem} defined as
\[
 \left[\widetilde{\nabla} \W^*_\gamma (\Blm) \right]_l  = \sum_{j=1}^{m}\sqrt{W}_{lj} \widetilde{\nabla} \W_{\gamma,\mu_j}^*(\blm_j).
\]
 Then, the quantized stochastic gradient is unbiased, i.e.,
 
 \vspace{-0.4cm}
 
\begin{equation}\label{eq:expected value}
 \quad \E \widetilde{\nabla} \W_{\gamma}^*(\Blm)  = \nabla \W_{\gamma}^*(\Blm) .
\end{equation}

\vspace{-0.2cm}

Furthermore, its variance 
is bounded by

\vspace{-0.8cm}

{\small
\begin{align*}
&\E \|\widetilde{\nabla} \W_{\gamma}^*(\Blm)  {-} \nabla \W_{\gamma}^*(\Blm)\|_2^2 \nonumber \leq
2  \lambda_{\max}(W)   \sum_{i=1}^m \Big( \dfr{1}{M_{i,1}} {+} \dfr{1}{M_{i,2}} \Big).
\end{align*}
}
\end{lemma}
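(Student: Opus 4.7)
The plan is to establish unbiasedness via the tower property applied across the two independent sampling stages, and to bound the variance by decomposing the error into a first-stage sampling component and a second-stage quantization component, then exploiting independence across agents to contract the $\sqrt{W}$-factor into $\lambda_{\max}(W)$.

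\textbf{Unbiasedness.} First I would condition on the first-stage samples $\{Y^i_r\}_{r=1}^{M_{i,1}}$. Given these, $\widehat{\nabla}\W_{\gamma,\mu_i}^*(\blm_i)$ is deterministic and lies in $S_1(n)$, and by construction each categorical variable $\xi_r$ satisfies $\E[e_{\xi_r}\mid \widehat{\nabla}\W_{\gamma,\mu_i}^*(\blm_i)] = \widehat{\nabla}\W_{\gamma,\mu_i}^*(\blm_i)$, which averages to $\E[\widetilde{\nabla}\W_{\gamma,\mu_i}^*(\blm_i)\mid \{Y^i_r\}] = \widehat{\nabla}\W_{\gamma,\mu_i}^*(\blm_i)$. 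By Lemma~\ref{lm:Lipschitz_gradient}, $\E_{Y\sim\mu_i}[p_i(\blm_i,Y)] = \nabla\W_{\gamma,\mu_i}^*(\blm_i)$, so the tower property yields $\E\widetilde{\nabla}\W_{\gamma,\mu_i}^*(\blm_i) = \nabla\W_{\gamma,\mu_i}^*(\blm_i)$. Linearity of expectation, combined with the $\sqrt{W}$-combination formula from Lemma~\ref{Lm:dual_obj_properties2}, then propagates this to the full dual gradient, giving \eqref{eq:expected value}.

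\textbf{Variance.} Let $\delta_i \triangleq \widetilde{\nabla}\W_{\gamma,\mu_i}^*(\blm_i) - \nabla\W_{\gamma,\mu_i}^*(\blm_i)$ and stack $\bdelta = [\delta_1^\top,\dots,\delta_m^\top]^\top \in \R^{mn}$. Since $W = \bar W \otimes I_n$, we have $\widetilde{\nabla}\W_\gamma^*(\Blm) - \nabla\W_\gamma^*(\Blm) = \sqrt{W}\bdelta$, so
\[
\E\|\sqrt{W}\bdelta\|_2^2 = \E\bigl[\bdelta^\top W \bdelta\bigr] = \sum_{i,j=1}^{m} [\bar W]_{ij}\,\E[\delta_i^\top \delta_j].
\]
All two-stage samples are drawn independently across agents, so $\E[\delta_i^\top\delta_j] = 0$ whenever $i\neq j$, and what remains is $\sum_i [\bar W]_{ii}\,\E\|\delta_i\|^2$. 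Since $\bar W$ is positive semidefinite, $[\bar W]_{ii} \leq \lambda_{\max}(\bar W) = \lambda_{\max}(W)$, which delivers the prefactor $\lambda_{\max}(W)$ in front of the per-agent variances.

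\textbf{Per-agent bound.} I would next split $\|\delta_i\|^2 \leq 2\|\widetilde{\nabla}\W_{\gamma,\mu_i}^* - \widehat{\nabla}\W_{\gamma,\mu_i}^*\|^2 + 2\|\widehat{\nabla}\W_{\gamma,\mu_i}^* - \nabla\W_{\gamma,\mu_i}^*\|^2$. For the sampling term, i.i.d.\ averaging gives variance $\tfrac{1}{M_{i,1}}\E\|p_i(\blm_i,Y^i) - \nabla\W_{\gamma,\mu_i}^*\|^2 \leq \tfrac{1}{M_{i,1}}\E\|p_i(\blm_i,Y^i)\|^2 \leq 1/M_{i,1}$, where the last step uses $p_i \in S_1(n)$ so $\|p_i\|_2 \leq \|p_i\|_1 = 1$. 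For the quantization term, conditioning on $\widehat{\nabla}\W_{\gamma,\mu_i}^*$ reduces it to an i.i.d.\ average of unit-norm basis vectors $e_{\xi_r}$, and the same simplex argument yields $\E\|\widetilde{\nabla}\W_{\gamma,\mu_i}^* - \widehat{\nabla}\W_{\gamma,\mu_i}^*\|^2 \leq 1/M_{i,2}$ after tower. Combining all pieces produces the stated inequality.

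The main obstacle is the careful bookkeeping of the two-stage conditioning: the tower property must be applied so that the quantization step is first averaged conditional on the first-stage outcome, and the cross-agent independence must be invoked before extracting the $\lambda_{\max}(W)$ factor. Once these structural observations are made, the simplex bound and the PSD diagonal inequality close the argument mechanically.
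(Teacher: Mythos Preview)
Your proof is correct, but the route you take to extract the $\lambda_{\max}(W)$ factor differs from the paper's. You expand the quadratic form $\E[\bdelta^\top W\bdelta] = \sum_{i,j}[\bar W]_{ij}\,\E[\delta_i^\top\delta_j]$, invoke cross-agent independence together with $\E\delta_i=0$ to kill the off-diagonal terms, and then bound the Laplacian diagonals via the PSD inequality $[\bar W]_{ii}\le\lambda_{\max}(\bar W)=\lambda_{\max}(W)$. The paper instead applies the operator-norm bound $\|\sqrt{W}\bdelta\|_2^2\le\lambda_{\max}(W)\|\bdelta\|_2^2$ directly, which requires no independence across agents at all; it only uses that the stacked error has squared norm $\sum_i\|\delta_i\|_2^2$. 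Your route is in principle slightly sharper before the last inequality (it yields $\sum_i[\bar W]_{ii}\,\E\|\delta_i\|_2^2$ rather than $\lambda_{\max}(W)\sum_i\E\|\delta_i\|_2^2$), while the paper's is more economical and shows that the cross-agent independence you flag as ``the main obstacle'' is actually not needed for the stated bound. The per-agent decomposition into a first-stage sampling term and a second-stage quantization term, each controlled by the simplex bound $\|p\|_2\le\|p\|_1=1$, is essentially identical in both arguments.
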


\vspace{-0.4cm}

There are two main advantages in the approach described in Lemma~\ref{lm:variance_stoch_gradient}. First, the size of the communicated messages between the nodes diminishes with a smaller second batch, while the double-batched algorithm preserves roughly the same complexity just as the Accelerated Distributed Computation of Wasserstein barycenter in~\cite{dvurechensky2018decentralize}. The second advantage is the quantization aspect such that agents exchange sparse integer vectors instead of dense vectors of float variables.

The main observation of this Section is that the dual problem to the WB setup is a stochastic optimization problem. Moreover, its stochastic gradients live in a probability simplex, which defines a discrete probability distribution. Such a unique structure allows for effective sampling strategies, which translate into effective communication due to the primal-dual properties of the problem. In the next section, we propose a new analysis of stochastic optimization problems with such a structure. Later we will specialize it for the WB problem, but our general results could be of independent interest.

  \vspace{-0.2cm}
\section{General Primal-Dual Accelerated Stochastic Gradient Method}\label{sec:general}
\vspace{-0.2cm}

For any finite-dimensional real vector space $E$, we denote by $E^*$ its dual, by $\la \lambda, x \ra$ the value of a linear function $\lambda \in E^*$ at $x\in E$. Let $\|\cdot\|$ denote some norm on $E$ and $\|\cdot\|_{*}$ denote the norm on $E^*$ which is dual to $\|\cdot\|$, i.e.
$\|\lambda\|_{*} = \max \{ \la \lambda, x \ra :  \|x\| \leq 1 \}$.
For a linear operator $A:E_1 \to E_2$, we define the adjoint operator $A^T: E_2^* \to E_1^*$ in the following way $\la u, A x \ra = \la A^T u, x \ra, \quad \forall ~ u \in E_2^*, x \in E_1$.
We say that a function $f: E \to \R$ has a $L$-Lipschitz continuous gradient w.r.t. norm $\|\cdot\|_{*}$ if it is continuously differentiable and its gradient satisfies Lipschitz condition $
\|\nabla f(x) - \nabla f(y) \|_{*} \leq L \|x-y\|, \quad \forall ~x,y \in E$.

 \vspace{-0.2cm}

The main problem we consider in this section is 
\begin{equation}
\label{eq:primal_problem}
\min_{x \in Q} \{f(x) \;\;:\;\; Ax=b\},
\end{equation}
where $Q$ is a simple closed convex set, $A: E \to H$ is given linear operator, $b \in H$ is given, $\Lambda = H^*$. 
The dual problem for \eqref{eq:primal_problem} consists in minimization of the function

\vspace{-0.9cm}

\begin{align}\label{eq:dual_problem}
\vp(\lambda) & \triangleq\la \lambda, b \ra +  \max_{x\in Q}\{ -f(x) - \la A^T \lambda  ,x \ra \} \nonumber \\
&= \la \lambda, b \ra + f^*(-A^T\lambda). 
\end{align}

\vspace{-0.2cm}

Here $f^*$ is the Fenchel-Legendre conjugate function for $f$.  
We say that a (random) point $\hat{x}$ is an $\e-$solution to Problem~\eqref{eq:primal_problem} if
\begin{align}\label{eq:eps_sol_def}
f(\E\hat{x})-f^* \leq \varepsilon, \;\; \|A\E\hat{x}-b\|_2\leq \frac{\varepsilon}{R},
\end{align}
where $R$ is such that the dual problem~\eqref{eq:dual_problem} has a solution $\lambda^*$ satisfying $\|\lambda^*\|_{2} \leq R < +\infty$.

We assume that $f^*(-A^T\lambda)= \E_\xi F^*(-A^T\lambda,\xi)$, where $\xi$ is random vector and $F^*$ is the Fenchel-Legendre conjugate function to some function $F(x,\xi)$, i.e. it satisfies $F^*(-A^T\lambda,\xi) = \max\limits_{x\in Q}\{\la -A^T \lambda,x\ra - F(x,\xi) \}$. $F^*(\blm,\xi)$ is assumed to be smooth and, hence $\nabla_{\blm} F^*(\blm,\xi) = x(\blm,\xi)$, where $x(\blm,\xi)$ is the solution of the maximization problem

\vspace{-0.8cm}

\begin{align*}
x(\blm,\xi) = \arg\max\limits_{x\in Q}\{\la \blm,x\ra - F(x,\xi)\}.
\end{align*}

\vspace{-0.2cm}

Further, we assume that the dual problem~\eqref{eq:dual_problem} can be accessed by a stochastic oracle $(\Phi(\lambda,\xi),~ \nabla \Phi(\lambda,\xi))$ with $\Phi(\lambda,\xi) = \la \lm, b\ra +F^*(-A^T\lambda,\xi)$ and $\nabla \Phi(\lambda,\xi) = b - A \nabla F^*(-A^T\lambda,\xi)$. Let us summarize our main assumptions as follows.
\begin{assumptions}
\label{main assumptions}
\begin{itemize}
  \item
    The gradient of the objective function $\vp$ is $L$-Lipschitz continuous.
\item The stochastic oracle pair $(\Phi(\lambda,\xi) , \nabla \Phi(\lambda,\xi) )$ for the dual problem satisfies
\begin{align}
  \E_\xi \Phi(\lambda,\xi) &= \vp(\lambda)  \\
   \E_\xi  \nabla \Phi(\lambda,\xi) &= \nabla \vp(\lambda) .
\label{eq:stochastic oracle pair}
\end{align}
\item There is a random variable $\wdxi$, which is independent of $\xi$, and a stochastic approximation $\tgrad \Phi (\lambda, \xi, \wdxi ) $ for $\nabla \Phi(\lambda,\xi)$ satisfying
\begin{align}
  &\E_{\wdxi} \tgrad \Phi (\lambda, \xi, \wdxi ) = \nabla \Phi(\lambda,\xi), \;\; \forall \lambda, \xi  \\
   &\E_{\xi,\wdxi} \|\tgrad \Phi (\lambda, \xi, \wdxi )  - \nabla \vp(\lambda) \|_2^2  \leq \sigma^2, \;\; \forall \lambda.
\label{eq:stochastic oracle pair 2}
\end{align}
\item The dual problem~\eqref{eq:dual_problem} has a solution $\lambda^*$ and there exists some $R >0$ such that $\|\lambda^*\|_{2} \leq R < +\infty$.
\end{itemize}
\end{assumptions}


We propose Algorithm~\ref{Alg:APDSGD} for solving~\eqref{eq:primal_problem} using the stochastic oracle described in Assumption~\ref{main assumptions}. Contrary to~\cite{dvurechensky2018decentralize}, we extend the accelerated stochastic gradient method from~\cite{devolder2011stochastic} and propose its primal-dual version. The main benefit is flexibility in the choice of the batch size, which is assumed fixed in~\cite{dvurechensky2018decentralize}. Also, the algorithm from~\cite{devolder2011stochastic} cannot be directly used since it does not allow us to reconstruct the solution of the primal problem, i.e., the barycenter in our application. We believe that this general primal-dual accelerated stochastic gradient method with flexible batch size can be of independent interest. The next theorem presents the convergence properties of Algorithm~\ref{Alg:APDSGD}.

\begin{theorem}
    \label{th:conv general alg}
Let Algorithm~\ref{Alg:APDSGD} be applied to solve Problem~\eqref{eq:primal_problem} under the above assumptions. 
\textbf{Case A} 
Assume also that at each iteration the variance $\sigma_k$ of the stochastic gradient satisfies $\sigma_k^2 \<= {\e L \alpha_k}/{A_k}$. 
  Then, Algorithm~\ref{Alg:APDSGD} outputs an $\e-$ solution to Problem~\eqref{eq:primal_problem}
  after $\sqrt{{8LR^{2}}/{\e}} $ iterations. \textbf{Case B} 
  Assume the variance $\sigma^2$ is constant. Then, Algorithm~\ref{Alg:APDSGD} outputs an $\e-$solution to Problem~\eqref{eq:primal_problem} after $ \max \{\sqrt{{4 LR^{2}}/{\e}}, {9 \sigma^{2} R^{2}}/{\e^{2}} \}$ iterations.  
\end{theorem}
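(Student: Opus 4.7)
The plan is to adapt the inexact accelerated gradient framework of~\cite{devolder2011stochastic} to the dual problem~\eqref{eq:dual_problem}, and to reconstruct a primal iterate via a weighted average so that the two components of the $\e$-solution condition~\eqref{eq:eps_sol_def} are controlled simultaneously. Concretely, Algorithm~\ref{Alg:APDSGD} generates AGM iterates with step weights $\alpha_{k+1}$ and accumulated weights $A_{k+1} = A_k + \alpha_{k+1}$ satisfying the standard recurrence $L\alpha_{k+1}^2 = A_{k+1}$, using the stochastic oracle $\widetilde{\nabla}\Phi(\lambda,\xi,\widetilde{\xi})$ in place of the true gradient. The candidate primal point would be
\begin{align*}
\hat{x}_N = \frac{1}{A_N}\sum_{k=0}^{N-1}\alpha_{k+1}\, x(-A^T\lambda_k,\xi_k),
\end{align*}
where $x(\mu,\xi)=\arg\max_{x\in Q}\{\langle \mu,x\rangle - F(x,\xi)\}$ is the argmax appearing in the definition of $F^*$. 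Since $\nabla_\mu F^*(\mu,\xi)=x(\mu,\xi)$ and $\widetilde{\xi}$ is independent of $\xi$, $\E[x(-A^T\lambda_k,\xi_k)] = \nabla f^*(-A^T\lambda_k)$, so $\hat{x}_N$ is the natural primal counterpart of the dual AGM iterates in a primal--dual accelerated scheme.

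The core of the argument is the stochastic AGM ``estimating sequence'' inequality. For deterministic AGM the chain of prox-updates combined with the descent lemma for $\vp$ yields, for every $\lambda$,
\begin{align*}
A_N \vp(\lambda_N) \leq \sum_{k=0}^{N-1}\alpha_{k+1}\bigl[\vp(\lambda_k)+\langle\nabla\vp(\lambda_k),\lambda-\lambda_k\rangle\bigr] + \tfrac{1}{2}\|\lambda-\lambda_0\|_2^2.
\end{align*}
With stochastic gradients the right-hand side picks up an additional noise term $\sum_{k=0}^{N-1}\tfrac{\alpha_{k+1}^2}{2}\sigma_k^2$ plus a zero-mean martingale that vanishes in expectation. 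Substituting the Fenchel identity $\vp(\lambda_k)+\langle\nabla\vp(\lambda_k),\lambda-\lambda_k\rangle = \langle\lambda, b-Ax(-A^T\lambda_k,\xi_k)\rangle - F(x(-A^T\lambda_k,\xi_k),\xi_k)$ and summing with the AGM weights, then applying convexity of $f$ via Jensen, produces the primal--dual bound
\begin{align*}
\E f(\hat x_N) + \E \vp(\lambda_N) \leq \langle\lambda, b-A\E\hat x_N\rangle + \frac{R^2}{2 A_N} + \frac{1}{A_N}\sum_{k=0}^{N-1}\frac{\alpha_{k+1}^2 \sigma_k^2}{2}.
\end{align*}
Setting $\lambda=\lambda^{*}$ and using $\|\lambda^{*}\|_2\leq R$ together with $\vp(\lambda_N)\geq\vp(\lambda^*) = -f^*$ bounds $f(\E\hat x_N)-f^*$ in terms of $\|A\E\hat x_N-b\|_2$ and the gap. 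Repeating the argument with $\lambda=\lambda^{*} + R\,(A\E\hat x_N-b)/\|A\E\hat x_N-b\|_2$ then bounds $\|A\E\hat x_N-b\|_2$ by the same right-hand side divided by $R A_N$, which is the standard Nesterov-type trick for linearly constrained problems.

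For Case A, the hypothesis $\sigma_k^2\leq \e L\alpha_k/A_k$ combined with $L\alpha_{k+1}^2 = A_{k+1}\leq 2 A_k$ gives $\sum_k \alpha_{k+1}^2\sigma_k^2/(2 A_N)\leq \e/2$, so the overall gap is dominated by the deterministic rate $R^2/(2A_N)\leq 2LR^2/N^2$; forcing this to be at most $\e/2$ yields $N\geq\sqrt{8 L R^2/\e}$. For Case B, constant variance $\sigma^2$ yields a noise contribution of order $\sigma^2 A_N/L$ in the numerator, which when divided by $A_N$ gives a term of order $\sigma^2 N/L$ (using $\alpha_{k+1}^2\sim A_{k+1}/L$); balancing each of $L R^2/N^2$ and $\sigma^2 N/L$ separately against $\e/2$ produces the two thresholds $\sqrt{4LR^2/\e}$ and $9\sigma^2 R^2/\e^2$. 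The main obstacle in making this precise is the stochastic primal--dual coupling: one must carefully pass expectations through the convex $f$ via Jensen while simultaneously controlling feasibility $\|A\E\hat x_N - b\|_2$ and value, since $x(-A^T\lambda_k,\xi_k)$ is only an unbiased estimator of the dual gradient, and tracking the exact constants in the AGM recurrence so that the Case B constants $4$ and $9$ emerge cleanly (rather than loose universal constants) is the most delicate bookkeeping part.
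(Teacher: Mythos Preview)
Your primal--dual skeleton and the treatment of Case~A are essentially the paper's approach: invoke the stochastic AGM estimating-sequence bound of~\cite{devolder2011stochastic}, take expectations to kill the martingale part, restrict the outer minimization to $\{\|\lambda\|_2\le 2R\}$, convert the linearized dual terms into primal quantities via the Fenchel relation (the paper's Lemma~\ref{lm:primaldualineqforlinearpart}, which is an inequality after $\E_{\xi_k}$, not the equality you wrote), and then extract both $f(\E\hat{x}_N)-f^*$ and $\|A\E\hat{x}_N-b\|_2$ by the standard choice of test point $\lambda$.

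Case~B, however, has a genuine gap. You work throughout with the recurrence $L\alpha_{k+1}^2=A_{k+1}$, i.e.\ a \emph{constant} $\beta_k$. With constant $\beta_k$ and constant variance $\sigma^2$, the accumulated noise term
\[
\frac{1}{A_N}\sum_{k=0}^{N}\frac{A_k}{\beta_k-L}\,\sigma^2
\;\;\Bigl(\text{equivalently your }\frac{1}{A_N}\sum_{k}\tfrac{\alpha_{k+1}^2}{2}\sigma^2\Bigr)
\]
is of order $\sigma^2 N/L$ and \emph{diverges} as $N\to\infty$; it cannot be ``balanced against $\e/2$'' by taking $N$ large. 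The inequality $\sigma^2 N/L\le \e/2$ is an \emph{upper} bound on $N$, contains no $R$, and certainly does not yield $9\sigma^2 R^2/\e^2$.

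The mechanism you are missing is that in Case~B the algorithm does \emph{not} use the standard step sizes: it sets $\beta_k = L + \dfrac{\sigma}{2^{1/4}\sqrt{3}R}(k+2)^{3/2}$, growing like $k^{3/2}$. This is precisely what makes the noise term shrink,
\[
\frac{1}{A_N}\sum_{k=0}^{N}\frac{A_k}{\beta_k-L}\,\sigma^2
\;\sim\;\frac{\sigma R}{A_N}\sum_{k=0}^{N}\frac{A_k}{(k+2)^{3/2}}
\;\sim\;\frac{\sigma R}{\sqrt{N}},
\]
at the price of inflating the leading term to $\dfrac{\beta_N R^2}{2A_N}\sim \dfrac{LR^2}{N^2}+\dfrac{\sigma R}{\sqrt{N}}$. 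Both contributions are then $O(\sigma R/\sqrt{N})$, and requiring this to be at most $\e/3$ is what produces the $9\sigma^2 R^2/\e^2$ threshold. Without the growing $\beta_k$ your argument cannot close for any $N$ once $\sigma$ is not tiny.
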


\begin{algorithm}[!t]
\caption{Primal-Dual Accelerated Stochastic Gradient Method}
\label{Alg:APDSGD}
{\small
\begin{algorithmic}[1]
    \STATE  \textbf{Initialization}\\
  Set $\eta_{0} = \zeta_{0} = z_{0}  =  \lambda_{0} = 0$
  and choose a number of iterations $N$.
    \FOR{$k=0,\dots, N-1$}
        \STATE Select the coefficients: 
        
         \textbf{Case A) } \\
    $A_{k }= \dfr{1}{2}  (k+1)(k+2)$, $\alpha_k =  \dfr{k+1}{2}$ with
    $\beta_{k}=  2L$.
    
         \textbf{Case B) }\\
        $\alpha_{k}{=} \dfr{k{+}1}{2\sqrt{2}}$, $\beta_{k}{=} L {+}\dfrac{\sigma (k+2)^{3/2}}{2^{1/4}\sqrt{3} R}$,
        $A_{k}{=} \dfr{(k{+}1)(k{+}2)}{4 \sqrt{2}}$. 
                    
        \STATE $ \tau_{k} = {\alpha_{k+1}}/{A_{k+1}}$.
        \STATE $ z_{k} = - \dfrac{1}{\beta_{k}}   \suml{l=0}^{k} \alpha_{l} \tgrad \Phi (\lambda_{l}, \xi_{l}, \wdxi_{l} )$ 
        \STATE $\lambda_{k+1} = \tau_{k}z_{k} + (1 - \tau_{k} )  \eta_{k}$.
        \STATE $ \zeta_{k+1} =  z_{k} -  \dfrac{\alpha_{k+1}}{\beta_{k}}  \tgrad \Phi (\lambda_{k+1}, \xi_{k+1}, \wdxi_{k+1} )$.
        \STATE $\eta_{k+1} = \tau_{k} \zeta_{k+1} + (1- \tau_{k}) \eta_{k} $.
    \ENDFOR
    \STATE $\hat{x}_{k+1} = \frac{1}{A_{k+1}}\sum_{l=0}^{k+1} \alpha_l 
                    x(-A^T\lambda_{k+1},\xi_{k+1}) $.
    \ENSURE The point $\hat{x}_{N}$. 
\end{algorithmic}}
\end{algorithm}

\vspace{-0.3cm}
\section{Proposed Algorithm and Analysis}\label{sec:main}
\vspace{-0.2cm}

In this section, we use the main primal-dual algorithm presented in Section~\ref{sec:general} to the specifics of Wasserstein Barycenter problem of Section~\ref{sec:Problem}. The algorithm has two different options for the batch size, one for the increasing batch size similar to the main algorithm in \cite{dvurechensky2018decentralize} and one for the constant batch size. We adapt the structure of the general Algorithm~\ref{Alg:APDSGD} for the specific properties of the WB. This allows us to propose Algorithm~\ref{Alg:mainWB} for the distributed computation of WB in the semi-discrete setting with quantized communications.  Finally, the following theorems establish the complexity results in Table \ref{tab:results_table}.

\begin{algorithm}[t!]
\caption{\textbf{Q-DecPDSAG:} Decentralized Quantized and Stochastic Computation of the Semi-Discrete Entropy-Regularized Wasserstein Barycenter}
\label{Alg:mainWB}
{\small
\begin{algorithmic}[1]
  \STATE \textbf{Initialization} 
  \STATE Each agent $i\in V$ is assigned a distribution $\mu_i$.
    \STATE All agents $i \in V$ set
$\eta^{0}_i = \zeta^{0}_i= z^{0}_i = \zeta^{0}_i = \blm^{0}_i = \hat{p}^0_i = \boldsymbol{0} \in \mathbb{R}^n$, and
$\gamma = \e / (4 \ln (\Omega ))$. 
\STATE Set the number of iterations $N$
\STATE Select the sampling scheme:

 \textbf{A) Increasing batch size.} \\
Set $M_{i}^{1,k} = M_{i}^{2,k} =
\max\{1 , \lceil { ( k+ 2)}/{ \ln ( \Omega )}  \rceil \}
$,
$A_{k }= \dfr{1}{2}  (k+1)(k+2)$, $\alpha_k =  \dfr{k+1}{2}$ with
$\beta_{k}=  2L$.

  \textbf{B) Constant batch size.}\\
Set $M_{i}^{1,k} = M_{i}^{2,k}= M$,
    $\alpha_{k}= \dfr{k+1}{2\sqrt{2}}$ and $\beta_{k}= L +\dfrac{\sigma(k+2)^{3/2}}{2^{1/4}\sqrt{3} R} $,
    $A_{k}= \dfr{(k+1)(k+2)}{4 \sqrt{2}}$. 

\STATE Compute $\widetilde{\nabla} \W_{\gamma,\mu_i}^*(\blm_i^0)$ according to Lemma~\ref{lm:variance_stoch_gradient}.
\STATE Send $\widetilde{\nabla} \W_{\gamma,\mu_i}^*(\blm_i^0)$ to neighbors, i.e., $\{j \mid (i,j) \in E \}$.
\STATE  \textbf{Initialization ends.} 
\STATE  \textbf{For each agent $i \in V$:}
    \FOR{$k=0,\dots, N-1$}
        \STATE $\tau^{k} = {\alpha_{k+1}}/{A_{k+1}}$
        \STATE $z^{k}_{i} = - \dfrac{1}{\beta_{k}}   \suml{l=0}^{k} \alpha_{l} \suml{j=1}^{m} W_{ij} \widetilde{\nabla} \cW^{*}_{\gamma, \mu_{j}}(\blm_i^k)$. \label{eq:semi-discr agd summation of the gradient 1}
        \STATE $ \blm^{k+1}_{i} = \tau_{k}z^{k}_{i} + (1 - \tau_{k} )  \eta^{k}_{i}$.
        \STATE Compute $\widetilde{\nabla} \W_{\gamma,\mu_i}^*(\blm_i^{k+1})$ according to Lemma~\ref{lm:variance_stoch_gradient}.
        \STATE Send $\widetilde{\nabla} \W_{\gamma,\mu_i}^*(\blm_i^{k+1})$ to neighbors.
        \STATE $\zeta^{k+1}_{i} =  z^{k}_{i} -  \dfrac{\alpha_{k+1}}{\beta_{k}}\suml{j=1}^{m} W_{ij} \widetilde{\nabla} \cW^{*}_{\gamma, \mu_{j}}(\blm_i^{k+1})$. \label{eq:semi-discr agd summation of the gradient 2}
        \STATE $\eta^{k+1}_{i} = \tau_{k} \zeta^{k+1}_{i} + (1- \tau_{k}) \eta^{k}_{i} $.
        \STATE  $\hat{p}^{k+1}_i =
        \frac{\alpha_{k+1}p^{k+1}(\blm^{k+1}_i, \cdot) + A_k \hat{p}^{k}_i}{A_{k+1}}.$
    \ENDFOR
\ENSURE The point $\hat{p}^{N}$.
\end{algorithmic}}
\end{algorithm}

\begin{theorem}\label{th:SAGD increasing batch}
  Let $\mathcal{G}$ be a connected, undirected, fixed graph, and let $\{\mu^i\}_{i \in V}$ be a set
  of probability distributions with $\mu_i \in \M(\mathcal{Z})$. Moreover, set
  $N = \tO \l({\sqrt{\chi (\overline{W}) n} }/{\e}\r) $ (communication rounds), and pick the increasing batch size
  strategy with $ M_{i,1}^k= M_{i,2}^{k} = \max \{ 1 , \lceil { (k+ 2)}/{ \log ( \Omega ) } \rceil \} $
  for all $i \in V$ and all $k\geq 0$. Then, the output of Algorithm~\ref{Alg:mainWB}, i.e.,
  $\hat{p}^{N}$, is an $\e-$ solution of~\eqref{eq:PrimPr}. Moreover, the total number of samples drawn from $\{\mu^i\}_{i \in V}$ is
  $\tO \l( {\sqrt{\chi (\overline{W})} mn }/{ \e^{2}} \r)$, the total number of arithmetic operations is $\tO \big( \max \{ {\chi (\overline{W}) m n^{2}}/{\e^{2}},{ \chi (\overline{W}) \kappa (\overline{W}) n}/{\e^{2}}  \} \big)$ a, and the total number of bits sent per node is 
$\tO \big( \sqrt{\chi (\overline{W})} \kappa (\overline{W}) \min \{ 
    { n^{3/2} }/{\e} , {n   }/{\e^{2}} \} \big) $.
\end{theorem}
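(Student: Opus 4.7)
The plan is to specialize the general primal-dual accelerated scheme of Theorem~\ref{th:conv general alg} (Case A) to the WB problem with the identifications developed in Sections~\ref{sec:Problem}--\ref{sec:dual}, and then translate the resulting iteration count into sample, arithmetic, and communication complexities. First I would recast \eqref{eq:PrimPr} in the form \eqref{eq:primal_problem} with $f(\p)=(1/m)\sum_{i=1}^m\W_{\gamma,\mu_i}(p_i)$ on the product simplex, $A=\sqrt{W}$, and $b=0$, so that the dual is exactly \eqref{eq:DualProblem}. Lemmas~\ref{lm:Lipschitz_gradient}--\ref{Lm:dual_obj_properties2} show the dual objective is $L$-smooth with $L=m\lambda_{\max}(W)/\gamma$, and the quantized oracle $\tgrad\W_\gamma^*(\Blm)$ of Lemma~\ref{lm:variance_stoch_gradient} is unbiased with variance bounded by $4m\lambda_{\max}(W)/\min_i\{M_{i,1}^k,M_{i,2}^k\}$ (via \eqref{eq:expected value} and the subsequent bound), so Assumption~\ref{main assumptions} is satisfied on a line.

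Second, I would verify the variance schedule of Case A. Since $\alpha_k/A_k=1/(k+2)$, the requirement $\sigma_k^2\<= \e L\alpha_k/A_k$ becomes, after plugging $L=m\lambda_{\max}(W)/\gamma$ and the bound above, $M_k\gtrsim \gamma(k+2)/\e$. With $\gamma=\e/(4\log\Omega)$ this is exactly the schedule $M_{i,1}^k=M_{i,2}^k=\lceil(k+2)/\log\Omega\rceil$ specified in Algorithm~\ref{Alg:mainWB}, so Theorem~\ref{th:conv general alg} yields an $\e$-solution of the regularized dual after $N=\sqrt{8LR^2/\e}$ iterations. Using the classical Laplacian bound $R^2=O(n/\lambda_{\min}^+(\overline{W}))$ on a dual optimizer of \eqref{eq:DualProblem} and the chosen $\gamma$, this gives $N=\tO(\sqrt{\chi(\overline{W})n}/\e)$. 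Proposition~\ref{prop:volume} then converts this into an $O(\e)$-solution of the non-regularized WB in \eqref{eq:PrimPr}, and the constraint part of \eqref{eq:eps_sol_def}, $\|\sqrt{W}\hat\p^N\|_2\<=\e/R$, is what makes the local outputs $\hat p_i^N$ asymptotically consensual in the sense required by \eqref{eq:Precise Wasserstein Barycenter Approximation}.

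Third, I would count resources. The total number of draws from $\{\mu^i\}$ is $2m\sum_{k=0}^{N-1}M_k=\tO(mN^2/\log\Omega)=\tO(\sqrt{\chi(\overline W)}\,mn/\e^2)$ after substituting $N^2\sim\chi(\overline W)n/\e^2$ and absorbing the logarithm into $\tO$. For arithmetic, each agent computes one softmax of dimension $n$ per sample at cost $O(n)$, yielding $O(nM_k)$ per node for local gradient assembly; the weighted neighbourhood aggregation in lines~\ref{eq:semi-discr agd summation of the gradient 1}--\ref{eq:semi-discr agd summation of the gradient 2} costs $O(\kappa(\overline W))$ per scalar coordinate handled. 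Summing over $k$ and all nodes gives the stated $\tO(\max\{\chi(\overline W)mn^2/\e^2,\chi(\overline W)\kappa(\overline W)n/\e^2\})$. For communication, the crucial point is that each message at iteration $k$ is the histogram $\widetilde\nabla\W_{\gamma,\mu_i}^*(\blm_i^k)$ with at most $M_k$ nonzero entries, so the per-iteration communication load is $O(M_k)$ nonzeros per node (equivalently $O(M_k\log n)$ bits), routed across the graph at cost proportional to $\kappa(\overline W)$; summing $\sum_{k\<= N}\min\{n,M_k\}$ gives the bit bound $\tO(\sqrt{\chi(\overline W)}\kappa(\overline W)\min\{n^{3/2}/\e,n/\e^2\})$.

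The main obstacle I expect is not any single step but the joint calibration of $\gamma$, $L$, $R$, and the batch schedule $M_k$: all four must be tuned simultaneously so that the variance condition of Case~A holds with equality up to constants (otherwise the sample complexity suffers), so that $\gamma$ is small enough for Proposition~\ref{prop:volume} to return an $O(\e)$ guarantee on \eqref{eq:PrimPr}, and so that the resulting $\sum_kM_k$ collapses to the stated $\tilde O$ bound. A secondary subtle point is that the per-iteration quantized message has nonzero support of size $\min\{n,M_k\}$ rather than $M_k$, which is what produces the two regimes in the final $\min\{n^{3/2}/\e,n/\e^2\}$ bit bound; tracking this minimum carefully through the summation over $k$ is where the bit-complexity claim is actually won.
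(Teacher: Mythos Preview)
Your plan is essentially the paper's own proof: the same identification of \eqref{eq:PrimPr} with \eqref{eq:primal_problem} via $A=\sqrt{W}$, the same verification that the batch schedule $M_k=\lceil(k{+}2)/\log\Omega\rceil$ makes the Case~A variance condition $\sigma_k^2\le \e L\alpha_k/A_k$ hold (using Lemma~\ref{lm:variance_stoch_gradient} and $L=m\lambda_{\max}(W)/\gamma$), the same substitution of $R$ and $L$ to get $N=\tO(\sqrt{\chi(\overline W)n}/\e)$, and the same per-iteration accounting $n\log n + M_k(n+\nnznum(\overline W_i))$ for arithmetic and $\nnznum(\overline W_i)\cdot(M_k\wedge n)$ for bits, summed over $k$ and nodes.

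Two small corrections. First, the dual-radius bound you quote is off by a factor $m$: the paper uses $R^2\le 2n/(m\lambda_{\min}^+(W))$ (see \eqref{eq:estimate of R}), and the $1/m$ is what cancels the $m$ in $L=m\lambda_{\max}(W)/\gamma$ to leave $LR^2=O(\chi(\overline W)n/\gamma)$; with your $R^2=O(n/\lambda_{\min}^+)$ you would pick up a spurious $\sqrt{m}$ in $N$ and all downstream bounds. Second, invoking Proposition~\ref{prop:volume} is unnecessary here, since the theorem asserts an $\e$-solution of the \emph{regularized} problem \eqref{eq:PrimPr}; the primal-dual guarantee \eqref{eq:eps_sol_def} from Theorem~\ref{th:conv general alg} already delivers this directly.
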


The results in Theorem~\ref{th:SAGD increasing batch} are two-fold. On the one hand, it provides the number of communication rounds required for Algorithm~\ref{Alg:mainWB} to obtain an arbitrary approximation of the non-regularized WB when sampling scheme Case A) is selected. Moreover, we provide an explicit sample, arithmetic, and communication complexities for generating such an approximation. In particular, if one uses expander graphs, which are in general well-connected, the total number of arithmetic operations is $\tO \l({ mn^{2}}/{\e^{2}}\r) $, and the total number of bits sent per node is $\tO (\min \{ { n^{3/2}}/{ \e} , { n }/{\e^{2}} \})$.

Next, we present a result for the sampling scheme Case B) in Algorithm~\ref{Alg:mainWB}.

\begin{theorem}\label{th:SAGD constant batch}
  Let $\mathcal{G}$ be a connected, undirected, fixed graph, and let $\{\mu^i\}_{i \in V}$ be a set
  of probability distributions where $\mu_i \in \M(\mathcal{Z})$. Moreover, set $N =\tO \big({\chi (\overline{W}) n }/{\e^{2}}\big) $ (communication rounds), and pick the constant batch strategy with   $ M_{i,1}^k= M_{i,2}^{k} = M $ for all $i \in V$ and all $k\geq 0$. Then, the output of   Algorithm~\ref{Alg:mainWB}, i.e., $\hat{p}^{N}$, is an $\e-$ solution of~\eqref{eq:PrimPr}. Moreover, {N}er of samples from the set of distributions
  $\{\mu^i\}_{i \in V}$ is $\tO \l( {  \chi (\overline{W})  mn }/{ \e^{2}} \r) $. The total number of arithmetic
  operations is $\tO  \big( {\chi (\overline{W}) \kappa (\overline{W}) n^{2} }/{\e^{2}} \big)$. The total number of bits sent per node for the general graph is $\tO\l({ \chi (\overline{W})   \kappa (\overline{W}) n}/{\e^{2}}\r)$.
\end{theorem}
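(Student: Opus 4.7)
The plan is to view Algorithm~\ref{Alg:mainWB} in its constant-batch (Case B) mode as an instantiation of Algorithm~\ref{Alg:APDSGD} applied to the dual problem~\eqref{eq:DualProblem}, and then to invoke Case B of Theorem~\ref{th:conv general alg}. Assumptions~\ref{main assumptions} are satisfied because Lemma~\ref{Lm:dual_obj_properties2} provides the Lipschitz constant $L = m\lambda_{\max}(W)/\gamma$, and Lemma~\ref{lm:variance_stoch_gradient} provides unbiasedness of the doubly-sampled quantized gradient together with a variance bound $\sigma^2 \leq 4m\lambda_{\max}(W)/M$ whenever $M_{i,1}^k = M_{i,2}^k = M$ is constant in $k$, as prescribed by the sampling scheme in the statement.

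Setting $\gamma = \e/(4\log\Omega)$ as in Proposition~\ref{prop:volume} reduces the approximation of~\eqref{eq:PrimPr} to approximating the $\gamma$-regularized objective within accuracy $\e/2$ in the sense of~\eqref{eq:Precise Wasserstein Barycenter Approximation}. With this choice, $L = \tO(m\lambda_{\max}(W)/\e)$ and, for $M = O(1)$, $\sigma^2 = \Theta(m\lambda_{\max}(W))$. Using the standard estimate of the dual radius for Laplacian-constrained problems with simplex-valued primal variable, $R^2 = O(n/(m\lambda_{\min}^+(W)))$ (obtained by combining the $O(\log n)$ uniform bound on $\|\nabla \W_{\gamma,\mu}(p)\|_\infty$ with the pseudo-inverse property of $\sqrt{W}$, exactly as in~\cite{dvurechensky2018decentralize}), a direct substitution shows that the acceleration term $\sqrt{4LR^2/\e} = \tO(\sqrt{\chi(\overline{W})\,n}/\e)$ is dominated by the stochastic term $9\sigma^2 R^2/\e^2 = \tO(\chi(\overline{W})\,n/\e^2)$ in the max of Case B of Theorem~\ref{th:conv general alg}, yielding $N = \tO(\chi(\overline{W})\,n/\e^2)$ communication rounds.

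The remaining complexities come from bookkeeping. Per iteration, each of the $m$ agents draws $M_{i,1}^k + M_{i,2}^k = O(1)$ samples, so the total sample count is $\tO(mN) = \tO(\chi(\overline{W})\,mn/\e^2)$. Per iteration, agent $i$ spends $O(Mn) = O(n)$ operations to evaluate the softmax~\eqref{eq:pes} and form its quantized local gradient; the neighbor aggregation $\sum_{j}\sqrt{W}_{ij}\widetilde{\nabla}\W_{\gamma,\mu_j}^*$ then costs $O(\mathrm{nnz}(\overline{W}_i)\cdot n)$ (the received messages are $O(1)$-sparse, but the running iterates $z_i,\zeta_i,\eta_i$ are dense in $\R^n$); summing over $i$ and over $N$ iterations yields $\tO(\chi(\overline{W})\,\kappa(\overline{W})\,n^2/\e^2)$ operations in total. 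Each transmitted quantized gradient is an $M = O(1)$-sparse non-negative integer histogram over $n$ bins, encoded in $O(\log n)$ bits, so the per-node communication, summed over all incident edges and $N$ iterations, is $\tO(\chi(\overline{W})\,\kappa(\overline{W})\,n/\e^2)$ bits.

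The hard part will be pinning down the dual radius $R^2 = O(n/(m\lambda_{\min}^+(W)))$ with exactly the right dependence on $n$, $m$ and $\lambda_{\min}^+(W)$, since a looser bound would inflate the $\chi(\overline{W})$ and $n$ factors in $N$. This rests on the boundedness of the entropic primal gradients together with the spectral decomposition of $W$ restricted to its range, and is a direct reuse of the argument in~\cite{dvurechensky2018decentralize}. The genuinely new ingredients of the proof are Lemma~\ref{lm:variance_stoch_gradient}, which controls the double-sampling quantization variance, and Case B of Theorem~\ref{th:conv general alg}, which accommodates a fixed stochastic-noise level without requiring a vanishing batch size.
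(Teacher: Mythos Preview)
Your proposal is correct and follows essentially the same route as the paper: instantiate Algorithm~\ref{Alg:APDSGD} (Case~B) on the dual problem~\eqref{eq:DualProblem}, plug in $L=m\lambda_{\max}(W)/\gamma$ from Lemma~\ref{Lm:dual_obj_properties2}, the variance bound $\sigma^2\le 4m\lambda_{\max}(W)/M$ from Lemma~\ref{lm:variance_stoch_gradient}, and the dual-radius estimate $R^2\le 2n/(m\lambda_{\min}^+(W))$, then read off $N$ from Theorem~\ref{th:conv general alg} Case~B and do the per-iteration bookkeeping for samples, flops and bits. One small slip: the bound on $R$ is not obtained via an $O(\log n)$ bound on $\|\nabla\W_{\gamma,\mu}(p)\|_\infty$ but via the cost bound $\|C_l\|_\infty$ (see the auxiliary Lemma in the appendix and \cite{kroshnin2019complexity}); your final $R^2=O(n/(m\lambda_{\min}^+(W)))$ is nonetheless the correct input to the rest of the argument.
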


Similarly as in Theorem~\ref{th:SAGD increasing batch}, when expander graphs are used, the total number of arithmetic   operations is $\tO \l({ mn^{2}}/{\e^{2}}\r) $, and the total number of bits sent per node is $ \tO({ mn }/{\e^{2}}) $.

\vspace{-0.2cm}
\section{Numerical Results}\label{sec:numerics}
\vspace{-0.2cm}

We present numerical experiments to support Theorem~\ref{th:SAGD constant batch} and Theorem~\ref{th:SAGD increasing batch}. Figure~\ref{fig:gaussians} shows the consensus gap and the dual function value of the WB problem generated by Algorithm~\ref{Alg:mainWB} for $m=10$ and $m=100$ agents. The set of distributions are set to be Gaussian with means and variances randomly selected and assigned. Moreover, we compare the performance for the Non-quantized AGM for WB computation proposed in~\cite{dvurechensky2018decentralize}. We test the algorithm for various combinations of sampling rates $M_1$ and $M_2$. As expected, the algorithm from~\cite{dvurechensky2018decentralize} has the best results because it is a non-quantized communication approach, and at each iteration, each agent communicates its full gradient with its neighbors. However, the experiments also show that even with a small number of samples like $M_1=1$ and $M_2=10$ Algorithm~\ref{Alg:mainWB} can rapidly converge to the barycenter with significantly less communication overhead. Figure~\ref{fig:gaussians2} shows the iterates generated by Algorithm~\ref{Alg:mainWB} for $k=\{10,100,200,500\}$ iterations. Again, we compare the performance with AGM from~\cite{dvurechensky2018decentralize} and various sampling schemes $M_1$ and $M_2$.

\vspace{-0.2cm}

\begin{figure}[t!]
    \centering
    \includegraphics[trim=9 7 12 5,clip,width=0.9\linewidth]{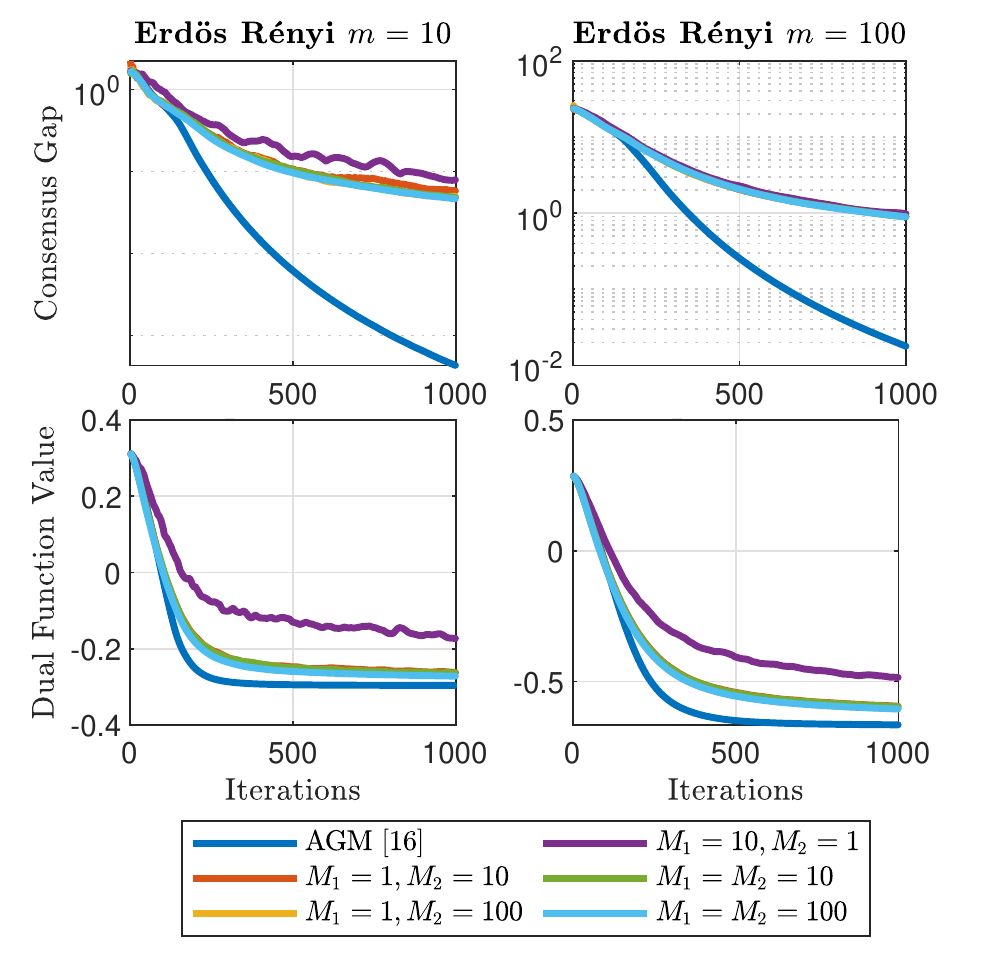}
    \caption{Consensus gap and dual function values generated by Algorithm~\ref{Alg:mainWB}.}
    \vspace{-0.2cm}
    \label{fig:gaussians}
\end{figure}

\begin{figure}[t!]
    \centering
    \includegraphics[trim=35 20 43 20,clip,width=1.01\linewidth]{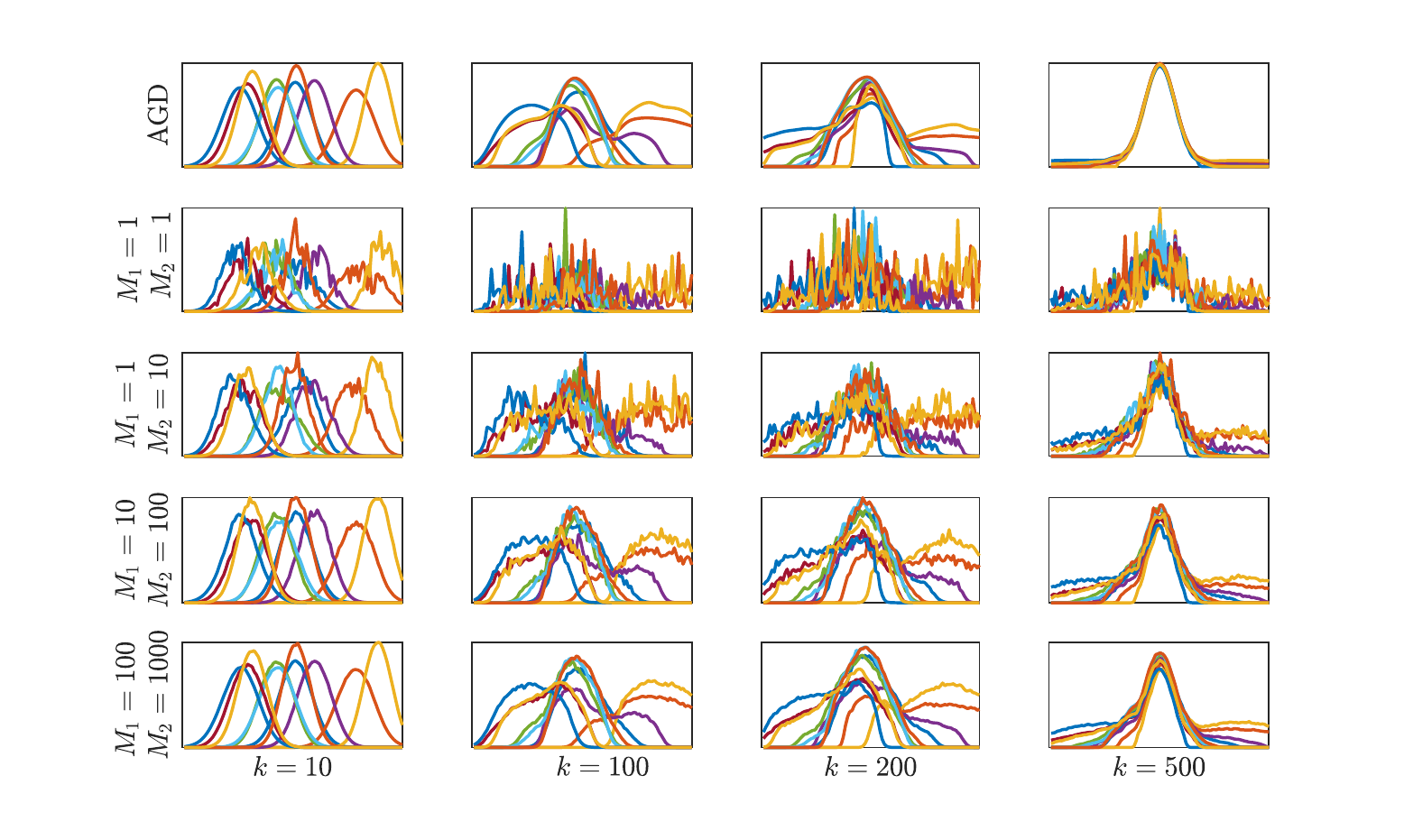}
    \caption{The iterates generated by Algorithm~\ref{Alg:mainWB}.}
    \vspace{-0.4cm}
    \label{fig:gaussians2}
\end{figure}

\vspace{-0.2cm}
\section{Discussion and Future Work}\label{sec:Discussion}

\vspace{-0.2cm}

We presented an algorithm for the computation of approximate semi-discrete WB over networks. We analyzed the primal-dual structure of the WB associated optimization problem and proposed a novel accelerated stochastic method to minimize functions with stochastic dual structure. The proposed method's main advantage is its flexibility in the sampling scheme of the dual stochastic gradient. Two sampling approaches were studied, increasing batch sized and constant batch size. The constant batch is particularly useful when a specific sample size is available at each iteration step. These sampling schemes drastically reduce the communication complexity of the WB computation. Moreover, we explicitly analyze the communication, sample, and arithmetic complexity of the proposed methods. Future work should focus on studying whether the total arithmetic operations can be improved further than ${nm^2}/{\e^2}$ as in the discrete case~\cite{kroshnin2019complexity,lin2020computational}.


\subsubsection*{Acknowledgements}
Funded by the Deutsche Forschungsgemeinschaft (DFG, German Research
Foundation) under Germany's Excellence Strategy – The Berlin Mathematics Research Center MATH+ (EXC-2046/1, project ID: 390685689). The research is supported by the Ministry of Science and Higher Education of the Russian Federation (Goszadaniye) No.075-00337-20-03, project No. 0714-2020-0005.

  \bibliographystyle{abbrv}
  \bibliography{wass,PD_references,time_varying,privacy,references11}




\onecolumn
\appendix
 \setcounter{lemma}{0}
 \setcounter{theorem}{0}
      \renewcommand{\thelemma}{\Alph{section}\arabic{lemma}}
      \renewcommand{\thetheorem}{\Alph{section}\arabic{theorem}}
      \renewcommand{\thealgorithm}{\Alph{section}\arabic{algorithm}}

\section{Proof of Proposition~\ref{prop:volume} }

It is clear that the density $\xi$ of the uniform distribution on $\Y \times \mathcal{Z}$ is $1/\Omega$, and that the entropy term in \eqref{WassDis} is bounded from below by zero and from above by the value achieved when
when $\pi$ is a Dirac mass. In the latter case the maximum value is $\gamma  \log \Omega $, which is no greater than $\varepsilon /4$ by our choice of $\gamma=\hat{\gamma} = \e / (4\log (\Omega ))$. Thus, for any $\mu,p$, $\cW_0(\mu,p) \in [\cW_{\hat{\gamma}}(\mu,p),\cW_{\hat{\gamma}}(\mu,p)+\varepsilon/4]$.
Substituting this in \eqref{eq:Precise Wasserstein Barycenter Approximation}, we obtain the desired result.

\section{Proof of Lemma \ref{lm:variance_stoch_gradient}}
By the definition of $\xi_r$, $\E_{\xi_r}[e_{\xi_r}]=\widehat{\nabla} \W_{\gamma,\mu_j}^*(\blm_j)$ and it is easy to verify that the stochastic approximation $\widetilde{\nabla} \W_{\gamma,\mu_j}^*(\blm_j)$ is unbiased, and \eqref{eq:expected value} holds.
Let us now estimate the variance.
 Denoting $\E = \E_{Y_l^i\sim \mu_j, \xi_{r}^{i} , i=1,...,m, l= 1, \dotso , M_{i,1} , r=1,...,M_{i,2}}$ where $\xi_{r}^{i}$ is distributed as described in Lemma \ref{lm:variance_stoch_gradient} we have the following chain of estimates. 
 
\begin{align*}
\E
\|\widetilde{\nabla} \W_{\gamma}^*(\Blm) - \nabla \W_{\gamma}^*(\Blm)\|_2^2 &=  \E \left\|
\sqrt{W} \left(
\begin{aligned}
\widetilde{\nabla} \W^*_{\gamma,\mu_1}&([\BBlm]_1)\\
&...\\
\widetilde{\nabla} \W^*_{\gamma,\mu_m}&([\BBlm]_m)
\end{aligned}
\right) -
\sqrt{W} \left(
\begin{aligned}
\nabla \W^*_{\gamma,\mu_1}&([\BBlm]_1)\\
&...\\
\nabla \W^*_{\gamma,\mu_m}&([\BBlm]_m)
\end{aligned}
\right)
\right\|_2^2\\
& \leq (\lambda_{\max}(\sqrt{W}))^2 \E\left\|
\begin{aligned}
\widetilde{\nabla} \W^*_{\gamma,\mu_1}([\BBlm]_1) &- \nabla \W^*_{\gamma,\mu_1}([\BBlm]_1)\\
&...\\
\widetilde{\nabla} \W^*_{\gamma,\mu_m}([\BBlm]_m) &- \nabla \W^*_{\gamma,\mu_m}([\BBlm]_m)
\end{aligned}
\right\|_2^2
\\
& = (\lambda_{\max}(\sqrt{W}))^2 \E\sum_{i=1}^m \left\| \widetilde{\nabla}\W^*_{\gamma,\mu_i}([\BBlm]_i) - \nabla \W^*_{\gamma,\mu_i}([\BBlm]_i) \right\|_2^2
\\
& = (\lambda_{\max}(W) ) \sum_{i=1}^m \E \left\| \dfr{1}{M_{i,2}} \suml{r = 1}^{M_{i,2}}  e_{\xi_{r} } (\BBlm_i, Y^i_{1} , \dotso , Y^i_{ M_{i,1}}  ) -  \E p_i (\BBlm_i, Y^i  ) \right\|_2^2.
\end{align*}


We define $\overline{p} = \dfr{1}{M_1} \s{l}{1}{M_1} p\l( \BBlm_i , Y_l^i \r)$.
Using the triangle inequality for the square of the norm we get: 
\begin{align}
\label{eq:ineq double rand}
  \E \|e_{\xi} -  \E(p) \|_2^2 \leq  2 \E \| e_{\xi} -  \bar{p} \|_2^2 + 2 \E \| \bar{p} - E(p)\|_2^2  .
\end{align}

Moreover, using the \cite[Lemma 2]{dvurechensky2018decentralize}, we have $ \E \| \bar{p} - \E(p)\|_2^2  \leq \frac{1}{M_1} -  \frac{1}{M_1}  \| \E (p) \|_2^2  $. For a
fixed $\lambda$ and $p(Y_r) := p(\lambda , Y_r)$, the first term in inequality \eqref{eq:ineq double rand} can be computed as follows:
\begin{align}
  \E \| e_{\xi} -  \bar{p} \|_2^2 = \E \left[\E[ \| e_{\xi} - \bar{p} \|_2^2 | Y] \right]  &=
                                  \E \Big[ \E \big[ \s{i}{1}{n} ([e_{\xi}]_i - [\bar{p}]_i ) | Y ] \big]
                                    \Big]  =  \E \left[ \E \left[\s{i}{1}{n} [ e_{\xi} ]_i |Y \right] \right] - \s{i}{1}{n} \E [\bar{p}]_i^2 = \nonumber \\ &= \E \left[ \s{i}{1}{n} [\bar{p}]_i \right] - \dfrac{1}{M_1^2} \s{i}{1}{n} \E \s{r}{1}{M_1} [p( Y_r) ]_i^2 \notag  = 1 - \dfrac{1}{M_1} \E \| p \|_2^2. \displaybreak
\end{align}
Using the above procedure for the sum of $e_{\xi}$ we get
\begin{align}\label{variance of stochastic gradient}
  \E \| \frac{1}{M_2} \s{r}{1}{n} e_{\xi_r} &- \E{p} \|_2^2 =  \frac{1}{M_2^2}\E \| \s{r}{1}{M_2}
\left( e_{\xi_r} - \bar{p} + \bar{p} - \E{p}\right) \|_2^2  \leq
  \E \left[ \frac{2}{M_2^2} \| \s{r}{1}{M_2} \left(  e_{\xi_r} -  \bar{p} \right)       \|_2^2 + 2 \| \bar{p} - \E{p}\|_2^2 \right].
\end{align}
Since $\xi_r$ are conditionally independent, we can bound the first term by:
\begin{align}
  \E\frac{2}{M_2^2} \| \s{r}{1}{M_2} (e_{\xi_r} - \s{r}{1}{M_2}\bar{p}) \|_2^2 \notag  = \frac{2}{M_2^2}  \Bigg( \E \Bigg[ \E \left[ \s{r}{1}{M_2} \|e_{\xi_r}\|_1 | Y \right] \Bigg] &- \notag  \E \left[ \left( \E \left[ \s{r}{1}{M_2} \| e_{\xi_r} \|_1 | Y \right] \right)^2 \right] \Bigg) \nonumber \\ & = \frac{2}{M_2^2} \E \left[ M_2 \s{i}{1}{n} [\bar{p}]_i \right] \notag - \frac{2\E \| \bar{p} \|_2^2}{M_2} =
\frac{2}{M_2} (1- E\| \bar{p}\|_2^2).
\end{align}

Thus, the inequality \eqref{variance of stochastic gradient} becomes:
\begin{align}
  \E \| \frac{1}{M_2}  \s{r}{1}{n} e_{\xi_r}  - \E{p} \|_2^2 & \leq \notag    \frac{2}{M_2} (1 - \dfrac{1}{M_1} \E \| p
\|_2^2) - \frac{2}{M_1} \| E [p] \|_2^2 + \frac{2}{M_1}  \\  & = \dfrac{2}{M_2} + \frac{2}{M_1}(1 - \| E [p] \|_2^2 -
\dfrac{2}{M_2 } \E \| p \|_2^2 ) \<= \nonumber
                                             2 \l(\dfrac{1}{M_2} + \dfrac{1}{M_1}\r) .
  \end{align}
This leads to:
\begin{align}
  \label{eq:3}
\E
\|\widetilde{\nabla} \W_{\gamma}^*(\Blm) - \nabla \W_{\gamma}^*(\Blm)\|_2^2 &=
 (\lambda_{\max}(W) ) \s{i}{1}{m} 2 \l(\dfrac{1}{M_{i,2}} + \dfrac{1}{M_{i,1}}\r),
\end{align}
and the desired result follows.

    \section{Proof of Theorem \ref{th:conv general alg}}

As mentioned in Section~\ref{sec:general}, we consider primal-dual pair of problems of the form:
\begin{align}
& \minl{ x \in Q} \{ f(x): Ax= b\}  \label{eq:pd_problems_primal} \\
  &  \minl{\lambda \in \Lambda } \vp (\lambda) = \minl{\lambda \in \Lambda }
\la \lambda, b \ra +  \max_{x\in Q}\{ -f(x) - \la A^T \lambda  ,x \ra \} =  \minl{\lambda \in \Lambda }\la \lambda, b \ra + f^*(-A^T\lambda)
    . \label{eq:pd_problems_dual}
\end{align}

As before we assume that the conditions \ref{main assumptions} are satisfied.

Algorithm \ref{Alg:APDSGD} generalizes the
 Algorithm 6.1 of \cite{devolder2011stochastic} 
 (see also \cite{dvurechensky2016stochastic,gasnikov2016stochasticInter}), here we present the Algorithm 6.1 to give a reader a better overview and its convergence theorem. Then we analyze the convergence rate of our primal-dual version of the Algorithm \ref{Alg:APDSGD}.
  Note that the primal-dual analysis of the existing accelerated methods \cite{yurtsever2015universal,anikin2017dual,bayandina2018mirror,chernov2016fast,dvurechensky2016primal-dual,dvurechensky2018computational,dvurechensky2020stable,dvurechensky2015primal-dual,guminov2019accelerated,nesterov2020primal-dual} does not apply since the dual problem is a stochastic optimization problem and we use additional randomization.

 Algorithm 6.1 of \cite{devolder2011stochastic} applied to the dual problem \eqref{eq:pd_problems_dual} with stochastic inexact oracle $\tgrad \Phi (\lambda, \xi, \wdxi )$ is listed as Algorithm~\ref{Alg:ASGD}. Since the objective $\vp(\lambda)$ has Lipschitz-continuous gradient, in the setting of \cite{devolder2011stochastic} we have $\delta=0$.
 

\begin{algorithm}[!h]
\caption{Stochastic Fast Gradient Method (\cite[Algorithm 6.1]{devolder2011stochastic})}
\label{Alg:ASGD}
{\small
\begin{algorithmic}[1]
\REQUIRE Sequences $\alpha_k,\beta_k,A_k=\sum_{i=0}^k\alpha_k$ satisfying Assumptions \ref{coefficients condition}. Distance generating function $d(\lambda)$ and corresponding Bregman divergence $V[z_{k}](\lambda)$.
    \STATE  \textbf{Initialization}\\
  Set $\eta_{0} = \zeta_{0} = z_{0}  =  \lambda_{0}  $ and choose a number of iterations $N$.
    \FOR{$k=0,\dots, N-1$}
        \STATE $ \tau_{k} = {\alpha_{k+1}}/{A_{k+1}}$.
        \STATE \begin{equation}
          \label{eq:z_step}
        z_{k}= \argmin_{\lambda \in \Lambda} \{ \beta_{k} d(\lambda ) + \suml{l=0}^{k} \alpha_{l} \la \tgrad \Phi(\lambda_{l}, \xi_{l},\wdxi_{l} ), \lambda - \lambda_{l} \ra \}
        \end{equation}
        \STATE $\lambda_{k+1} = \tau_{k}z_{k} + (1 - \tau_{k} )  \eta_{k}$.
        \STATE Compute $\tgrad \Phi( \lambda_{k+1}, \xi_{k+1}, \wdxi_{k+1})$.
        \STATE
        \begin{equation}
          \label{eq:zeta_step}
        \zeta_{k+1}  = \argmin_{\lambda \in \Lambda}\{ \la \beta_{k}V[z_{k}](\lambda) + \alpha_{k+1} \la \tgrad\Phi( \lambda_{k+1}, \xi_{k+1}, \wdxi_{k+1} ) , \lambda - z_{k} \ra
                      \} .
        \end{equation}
        \STATE $\eta_{k+1} = \tau_{k} \zeta_{k+1} + (1- \tau_{k}) \eta_{k} $.
        \ENDFOR
    \ENSURE The point $\eta_{N}$.
\end{algorithmic}}
\end{algorithm}

The following condition for the convergence analysis of Algorithm~\ref{Alg:ASGD} is stated at the beginning of Section 6.1 in \cite{devolder2011stochastic}.
\begin{assumptions}
  \label{coefficients condition}
  The following conditions hold for the sequences $\{\alpha_k\}$ and $\{\beta_k\}$, for all $k\geq 0$.
\begin{align}
  \label{eq:condition on the coefficients AGD}
  \alpha_{0} \in ]0,1 ]  \text{ and } & \beta_{k+1} \>= \beta_{k} > L \text{ for all } k\>= 0 , \notag \\
 \text{Coupling condition: } & \alpha_{k}^{2}\beta_{k} \<= \Big(\suml{l=0}^{k} \alpha_{\ell} \Big) \beta_{k-1}.
\end{align}
\end{assumptions}

We denote, for any $N\geq 0$,
  \begin{align}
     \widetilde{\Psi}_{N}(\lambda)  &= \beta_{N}d(\lambda) + \suml{k=0}^{N} \alpha_{k} \big( \Phi (\lambda_{k}, \xi_{k}) \notag
     + \la \tgrad \Phi (\lambda_{k} , \xi_{k}, \wdxi_{k}) , \lambda - \lambda_{k} \ra \big) , \quad \wdPsi_{N}^{*} = \minl{\lambda \in \Lambda} \wdPsi_{N}(\lambda).
  \end{align}

    \begin{theorem}[Lemma 2 in \cite{devolder2011stochastic}]\label{th:prec_cert}
Assume that Conditions~\ref{main assumptions}, \ref{coefficients condition} are satisfied. Then, the output of Algorithm~\ref{Alg:ASGD} 
for all $N\>= 0$ has the following property:
    \begin{align}
\label{ineq:prec_cert}
     A_{N}\vp(\eta_{N}) &\<= \widetilde{\Psi}_{N}^{*} + \suml{k=0}^{N} \alpha_{k} ( \vp (\lambda_{k}) - \Phi(\lambda_{k}, \xi_{k}))
     + \suml{k=1}^{N} A_{k-1} \la \nabla \vp(\lambda_{k}) - \tgrad \Phi(\lambda_{k}, \xi_{k}, \wdxi_{k}), \lambda_{k} - \eta_{k-1} \ra \notag \\ & \qquad  + \suml{k=0}^{N} \dfrac{A_{k}}{(\beta_{k}- L )} \|\tgrad \Phi_{k}(\lambda_{k},\xi_{k},\wdxi_{k}) -\nabla \vp (\lambda_{k}) \|_{2}^{2}.
    \end{align}

  \end{theorem}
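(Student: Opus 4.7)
Proof proposal. The statement is the classical precertificate for accelerated methods with a stochastic inexact oracle, so I would prove it by induction on $N$ in the estimating-sequences style of Devolder-Nesterov, tracking how each of the three stochastic error sums in~\eqref{ineq:prec_cert} arises along the way. The maintained invariant is $A_{N}\varphi(\eta_{N}) \leq \widetilde{\Psi}_{N}^{*} + \mathcal{E}_{N}$, where $\mathcal{E}_{N}$ collects the three error sums on the right-hand side; the base case $N=0$ follows from $\eta_{0}=\lambda_{0}=z_{0}$ and one application of the $L$-smoothness descent lemma, with the $k=0$ contributions to the error sums appearing when $\Phi(\lambda_{0},\xi_{0})$ is replaced by $\varphi(\lambda_{0})$ and $\tgrad\Phi(\lambda_{0},\xi_{0},\wdxi_{0})$ by $\nabla\varphi(\lambda_{0})$.

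For the inductive step, three ingredients drive the argument. (a) Since $d$ is $1$-strongly convex, $\widetilde{\Psi}_{k}$ is $\beta_{k}$-strongly convex and the optimality condition for $z_{k}$ in~\eqref{eq:z_step} gives $\widetilde{\Psi}_{k}(\lambda) \geq \widetilde{\Psi}_{k}(z_{k}) + \beta_{k} V[z_{k}](\lambda)$ for every $\lambda$. (b) The identity $\eta_{k+1}-\lambda_{k+1} = \tau_{k}(\zeta_{k+1}-z_{k})$, combined with the $L$-smoothness of $\varphi$, yields
\begin{equation*}
\varphi(\eta_{k+1}) \leq \varphi(\lambda_{k+1}) + \tau_{k}\langle \nabla\varphi(\lambda_{k+1}), \zeta_{k+1}-z_{k}\rangle + \tfrac{L\tau_{k}^{2}}{2}\|\zeta_{k+1}-z_{k}\|^{2}.
\end{equation*}
(c) Convexity of $\varphi$ together with $\lambda_{k+1} = \tau_{k} z_{k} + (1-\tau_{k})\eta_{k}$ gives $\varphi(\lambda_{k+1}) \leq \tau_{k}\varphi(z_{k})+(1-\tau_{k})\varphi(\eta_{k})$. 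I would then multiply (b) by $A_{k+1}$, use the identity $A_{k+1}(1-\tau_{k})=A_{k}$ to substitute the inductive bound into the $\varphi(\eta_{k})$ term, and upper bound $\alpha_{k+1}\varphi(z_{k})$ by evaluating $\widetilde{\Psi}_{k+1}$ at $\zeta_{k+1}$ and invoking (a). The coupling condition $\alpha_{k+1}^{2}\beta_{k+1}\leq A_{k+1}\beta_{k}$ ensures that the quadratic $\tfrac{\alpha_{k+1}^{2} L}{2A_{k+1}}\|\zeta_{k+1}-z_{k}\|^{2}$ from (b) is absorbed by the strong-convexity lower bound from (a), leaving a positive surplus of weight $\tfrac{A_{k+1}(\beta_{k}-L)\tau_{k}^{2}}{2}$ available to absorb noise.

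The three error sums then emerge from two standard substitutions. Every $\alpha_{k}\Phi(\lambda_{k},\xi_{k})$ inside $\widetilde{\Psi}_{k+1}^{*}$ is rewritten as $\alpha_{k}\varphi(\lambda_{k})-\alpha_{k}(\varphi(\lambda_{k})-\Phi(\lambda_{k},\xi_{k}))$, producing the first error sum. Each inner product $\alpha_{k}\langle \tgrad\Phi(\lambda_{k},\xi_{k},\wdxi_{k}),\cdot\rangle$ is rewritten by adding and subtracting $\nabla\varphi(\lambda_{k})$: the piece against $\lambda_{k}-\eta_{k-1}$ is kept \emph{linear} (it later vanishes in expectation by unbiasedness of $\tgrad\Phi$) and produces the second error sum, while the piece against the step $\zeta_{k+1}-z_{k}$ is split by Young's inequality into the third error sum plus a quadratic in $\|\zeta_{k+1}-z_{k}\|$. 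The main obstacle is the precise constant-matching: the Young splitting must be calibrated with the exact weight $1/(\beta_{k}-L)$ so that its quadratic residual lands inside the slack produced by the coupling condition, and the linear-term coefficient $A_{k-1}$ in~\eqref{ineq:prec_cert} must emerge correctly by telescoping the cross-term sums over $k$. Once this bookkeeping of constants is verified, the induction closes and~\eqref{ineq:prec_cert} follows at $N$.
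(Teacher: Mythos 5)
The paper itself offers no proof of this statement: it is imported verbatim as Lemma~2 of \cite{devolder2011stochastic}, so there is no in-paper argument to compare yours against. Your sketch reconstructs the standard estimating-sequences proof that underlies the cited lemma, and its ingredients are the right ones: the invariant $A_N\vp(\eta_N)\le\widetilde{\Psi}_N^*+\mathcal{E}_N$, strong convexity of $\widetilde{\Psi}_k$ at $z_k$, the descent step along $\eta_{k+1}-\lambda_{k+1}=\tau_k(\zeta_{k+1}-z_k)$, the coupling condition $\alpha_k^2\beta_k\le A_k\beta_{k-1}$ to absorb the $L$-quadratic, and the two substitutions ($\Phi\to\vp$ and $\tgrad\Phi\to\nabla\vp$) that generate the three error sums in \eqref{ineq:prec_cert}. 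This is exactly the route one expects, and the linear cross term with coefficient $A_{k-1}$ against $\lambda_k-\eta_{k-1}=\tau_{k-1}(z_{k-1}-\eta_{k-1})$ is indeed the term that is later killed in expectation by unbiasedness (as the paper does in the proof of Theorem~\ref{th:agd_conv_rate_primdual}). The only place where your write-up stops short of a proof is the constant bookkeeping you yourself flag: a direct Young split of $\alpha_{k+1}\la\tgrad\Phi-\nabla\vp,\zeta_{k+1}-z_k\ra$ against the surplus $\tfrac{\alpha_{k+1}^2(\beta_k-L)}{2A_{k+1}}\|\zeta_{k+1}-z_k\|^2$ naturally produces a weight $\tfrac{A_{k+1}}{2(\beta_k-L)}$ rather than the stated $\tfrac{A_k}{\beta_k-L}$, so one must either track the indices more carefully or accept a harmless constant-factor discrepancy with the quoted form; this is a matter of verification, not a flaw in the approach.
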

  
Let us now move to our generalization of Algorithm~\ref{Alg:ASGD}.  In our setting, we take Algorithm~\ref{Alg:ASGD} with a particular choice of the prox-function $d(\lambda)= \dfr{1}{2} \|\lambda \|_{2}^{2}$ and the corresponding Bregman divergence
$V[\wdlm](\lambda) = \dfr{1}{2} \| \lambda - \wdlm \|_{2}^{2}$.
Then, the steps \eqref{eq:z_step} and \eqref{eq:zeta_step} of Algorithm~\ref{Alg:ASGD} can be computed analytically. Indeed, the optimality conditions result in equations
$ \beta_{k} \lambda + \suml{l=0}^{k} \alpha_{l} \tgrad \Phi (\lambda_{l}, \xi_{l},\wdxi_{l})= 0$ and
$  \beta_{k} ( \lambda - z_{k} ) + \alpha_{k+1}  \tgrad \Phi (\lambda_{k+1} , \xi_{k+1}, \wdxi_{k+1})= 0$. 
The steps therefore read as
$ z_{k} = - \dfrac{1}{\beta_{k}}   \suml{l=0}^{k} \alpha_{l} \nabla \Phi (\lambda_{l}, \xi_{l})  $ and
$ \zeta_{k+1} =  z_{k} -  \dfrac{\alpha_{k+1}}{\beta_{k}}  \nabla \Phi (\lambda_{k+1} , \xi_{k+1})$.
Our generalization consists in adding an update for the primal variable, such that not only the dual, but also the primal problem \eqref{eq:pd_problems_primal} is solved with an optimal rate.
Thus, in our primal-dual algorithm, we add the step
$ \hat{x}_{k+1} = \frac{1}{A_{k+1}}\sum_{i=0}^{k+1} \alpha_i x(-A^T\lambda_i,\xi_i) = \frac{\alpha_{k+1}x(-A^T\lambda_{k+1},\xi_{k+1})+A_k\hat{x}_{k}}{A_{k+1}}$
to the algorithm
for the update of the primal variable. Here the vector $x(-A^T\lambda,\xi)$ is defined as
\begin{align*}
x(-A^T\lambda,\xi) = \arg\max\limits_{x\in Q}\{\la -A^T\lambda,x\ra - F(x,\xi)\}.
\end{align*}
We also specify two choices of sequences $\alpha_k,\beta_k,A_k=\sum_{i=0}^k\alpha_k$.
Having made these adjustments we arrive at Algorithm~\ref{Alg:APDSGD}.






The following Lemma is needed for the proof of Theorem \ref{th:agd_conv_rate_primdual}.
\begin{lemma}
\label{lm:primaldualineqforlinearpart}
Let the assumptions of Section~\ref{sec:general} in the main paper hold. Then
\begin{align}
  \label{eq:proof_st_3}
  \E_{\xi_k} (\vp(\lambda_{k}) &+ \la \nabla \Phi(\lambda_{k},\xi_k), \lambda - \lambda_{k}\ra ) \leq -f(\E_{\xi_k}x(-A^T\lambda_k,\xi_k)) + \la b -
A\E_{\xi_k} x(-A^T\lambda_k,\xi_k), \lambda\ra.
\end{align}
\end{lemma}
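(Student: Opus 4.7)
My plan is to prove the stronger identity
$$\E_{\xi_k}\bigl(\vp(\lambda_k) + \la \nabla \Phi(\lambda_k,\xi_k),\, \lambda-\lambda_k\ra\bigr) = -f(\bar x_k) + \la b - A\bar x_k,\, \lambda\ra,$$
where $\bar x_k \triangleq \E_{\xi_k} x(-A^T\lambda_k,\xi_k)$; the inequality in the lemma then follows immediately. The idea is that the right-hand side is built from the Fenchel--Young \emph{inequality} $f(\bar x_k) + f^*(-A^T\lambda_k) \geq \la -A^T\lambda_k,\bar x_k\ra$, and I will show that at the particular pair $(\bar x_k,\,-A^T\lambda_k)$ it is in fact tight, because $\bar x_k$ coincides with $\nabla f^*(-A^T\lambda_k)$.

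Concretely, I proceed in three steps. \textbf{Step 1 (take the expectation).} Since $\lambda_k$ is $\sigma(\xi_0,\dots,\xi_{k-1})$-measurable, $\E_{\xi_k}\vp(\lambda_k) = \vp(\lambda_k)$. Combining unbiasedness of the oracle with the envelope-theorem identity $\nabla_\lambda \Phi(\lambda,\xi) = b - A\, x(-A^T\lambda,\xi)$ yields $\E_{\xi_k} \nabla \Phi(\lambda_k,\xi_k) = b - A\bar x_k$, so the left-hand side of the lemma equals $\vp(\lambda_k) + \la b - A\bar x_k,\, \lambda-\lambda_k\ra$. \textbf{Step 2 (Fenchel equality).} Expand $\vp(\lambda_k) = \la \lambda_k,b\ra + f^*(-A^T\lambda_k)$ via \eqref{eq:dual_problem}. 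Swapping expectation and derivative in $f^*(y) = \E_\xi F^*(y,\xi)$ gives
$$\nabla f^*(-A^T\lambda_k) = \E_{\xi_k} \nabla_y F^*(-A^T\lambda_k,\xi_k) = \E_{\xi_k} x(-A^T\lambda_k,\xi_k) = \bar x_k,$$
so $\bar x_k \in \partial f^*(-A^T\lambda_k)$, and the Fenchel--Young identity for the conjugate pair $(f,f^*)$ at a subgradient becomes
$$f^*(-A^T\lambda_k) = \la -A^T\lambda_k,\bar x_k\ra - f(\bar x_k).$$
\textbf{Step 3 (combine).} Substitute this expression for $f^*(-A^T\lambda_k)$ into the simplification of the left-hand side obtained in Step 1. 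Expanding $\la b - A\bar x_k,\,\lambda-\lambda_k\ra$, the pairs $\la \lambda_k,b\ra$ and $\la A^T\lambda_k,\bar x_k\ra$ cancel, leaving exactly $-f(\bar x_k) + \la b - A\bar x_k,\lambda\ra$, as required.

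The main technical point is justifying $\bar x_k = \nabla f^*(-A^T\lambda_k)$, i.e.\ the interchange of expectation and gradient. This is standard under the regularity of $F^*(\cdot,\xi)$ implicit in Assumptions~\ref{main assumptions} (continuous differentiability with a dominating integrable envelope, which is also consistent with the assumed $L$-Lipschitz gradient of $\vp$). Once this identity is in hand the Fenchel--Young inequality is activated as an equality, and the remaining algebra is mechanical.
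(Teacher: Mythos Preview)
Your argument is correct and in fact establishes the result as an \emph{equality}, not merely the stated inequality. The paper itself does not provide a proof of this lemma; it simply defers to \cite[Theorem~2]{dvurechensky2018decentralize}. So there is no in-paper argument to compare against, and your self-contained derivation via the Fenchel--Young identity at the point $\bar x_k=\nabla f^*(-A^T\lambda_k)$ is the natural route.

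Two small remarks that would tighten the write-up. First, you should make explicit that $\bar x_k\in Q$, since here $f^*$ denotes the conjugate of $f+I_Q$, and the Fenchel--Young equality you invoke is really for the pair $(f+I_Q,f^*)$; membership $\bar x_k\in Q$ follows from convexity of $Q$ and $x(-A^T\lambda_k,\xi)\in Q$ for every $\xi$. Second, be aware that the unbiasedness hypothesis in Assumptions~\ref{main assumptions}, $\E_\xi\nabla\Phi(\lambda,\xi)=\nabla\vp(\lambda)$, literally only yields $A\bar x_k=A\nabla f^*(-A^T\lambda_k)$, not $\bar x_k=\nabla f^*(-A^T\lambda_k)$. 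Your differentiation-under-the-integral argument for $f^*(y)=\E_\xi F^*(y,\xi)$ is what closes this gap, and it is certainly valid in the Wasserstein-barycenter application (where $F^*$ is an explicit log-sum-exp with bounded gradient $x(\cdot,\xi)\in S_1(n)$), but it is a slightly stronger regularity than what the abstract assumption list formally states. With that caveat noted, your proof stands.
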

\begin{proof}
The proof can be found in~\cite[Theorem 2]{dvurechensky2018decentralize}.
\end{proof}

Theorem \ref{th:agd_conv_rate_primdual} gives a convergence result for Algorithm \ref{Alg:APDSGD} which is a primal-dual version of the Algorithm \ref{Alg:ASGD}. 

  \begin{theorem}\label{th:agd_conv_rate_primdual}
Let Assumptions \ref{main assumptions} hold 
and the variance in each iteration be bounded $\E_{\xi_{k},\wdxi_k} \| \tgrad \Phi (\lambda_k,\xi_k,\wdxi_k ) - \nabla \vp(\lambda_k) \|_2^2 \leq \sigma^{2}_k$.
Let accuracy $\varepsilon > 0$ and the number of steps $N \geq 0$ be given.
Then, for the primal sequence $\hat{x}_{k+1} = \frac{1}{A_{k+1}}\sum_{i=0}^{k+1} \alpha_i
x(-A^T\lambda_i,\xi_i)  = \frac{\alpha_{k+1}x(-A^T\lambda_{k+1},\xi_{k+1})+A_k\hat{x}_{k}}{A_{k+1}}$
its expected value $\E \hat{x}_{N}$ satisfies 
\begin{align}
  f(\mathbb{E}\hat{x}_N)-f^* \leq
     \dfr{ \beta_{N}R^{2}}{2 A_{N}}  + \dfrac{1}{A_{N}} \suml{k=0}^{N} \dfrac{A_{k}}{(\beta_{k}- L )} \sigma^{2}_k , \label{eq:APDSGD prec cert}
\\ \|A\mathbb{E}\hat{x}_N-b\|_2 \leq
     \dfr{ \beta_{N}R}{2 A_{N}}  + \dfrac{1}{R A_{N}} \suml{k=0}^{N} \dfrac{A_{k}}{(\beta_{k}- L )} \sigma^{2}_k . \label{eq:APDSGD prec cert 2}
\end{align}
where the expectation is taken w.r.t. all the randomness $\xi_1,\dots, \xi_N$, $\wdxi_1,\dots, \wdxi_N$.

  \end{theorem}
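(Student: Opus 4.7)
The plan is to combine the dual precision certificate of Theorem~\ref{th:prec_cert} with the primal-to-dual bridge of Lemma~\ref{lm:primaldualineqforlinearpart}. Observe first that Algorithm~\ref{Alg:APDSGD} is exactly Algorithm~\ref{Alg:ASGD} specialized to the prox-function $d(\lambda)=\tfrac12\|\lambda\|_2^{2}$ (so that the $z$- and $\zeta$-steps have the closed form written in the pseudocode) with one of the two prescribed triples $(\alpha_k,\beta_k,A_k)$, augmented by the extra primal averaging $\hat x_{k+1}=(\alpha_{k+1}x(-A^{T}\lambda_{k+1},\xi_{k+1})+A_k\hat x_k)/A_{k+1}$. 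The averaging step does not affect the dual side, so Theorem~\ref{th:prec_cert} applies as-is and inequality~\eqref{ineq:prec_cert} is the starting point.

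First I would take the full expectation of~\eqref{ineq:prec_cert} over $(\xi_0,\tilde\xi_0,\ldots,\xi_N,\tilde\xi_N)$. Let $\mathcal F_{k-1}$ denote the $\sigma$-algebra generated by the first $k$ sample pairs. Since $\lambda_k$ and $\lambda_k-\eta_{k-1}$ are $\mathcal F_{k-1}$-measurable, the tower property together with $\mathbb{E}[\Phi(\lambda_k,\xi_k)\mid\mathcal F_{k-1}]=\varphi(\lambda_k)$ kills the second sum on the right, and the combined unbiasedness $\mathbb{E}[\tgrad \Phi(\lambda_k,\xi_k,\tilde\xi_k)\mid\mathcal F_{k-1}]=\nabla\varphi(\lambda_k)$ (obtained by conditioning first on $\tilde\xi_k$, then on $\xi_k$) kills the third. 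The variance hypothesis bounds the fourth sum in expectation by $\sum_{k=0}^{N}\tfrac{A_k}{\beta_k-L}\sigma_k^{2}$, giving
\begin{equation*}
A_N\,\mathbb{E}\varphi(\eta_N)\le\mathbb{E}\widetilde\Psi_N^{*}+\sum_{k=0}^{N}\frac{A_k}{\beta_k-L}\sigma_k^{2}.
\end{equation*}

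Next I would upper-bound $\mathbb{E}\widetilde\Psi_N^{*}$ in primal terms. For any fixed $\lambda$, $\widetilde\Psi_N^{*}\le\widetilde\Psi_N(\lambda)$, and the same unbiasedness turns the expected linear part into $\sum_k\alpha_k\mathbb{E}(\varphi(\lambda_k)+\langle\nabla\varphi(\lambda_k),\lambda-\lambda_k\rangle)$. Applying Lemma~\ref{lm:primaldualineqforlinearpart} conditionally on $\mathcal F_{k-1}$ and taking the outer expectation, then using Jensen's inequality on the convex $f$ to replace $\mathbb{E}[f(\mathbb{E}_{\xi_k}[x(-A^{T}\lambda_k,\xi_k)\mid\mathcal F_{k-1}])]$ by the weaker lower bound $f(\bar x_k)$ with $\bar x_k\triangleq\mathbb{E}\,x(-A^{T}\lambda_k,\xi_k)$, and finally applying one more Jensen on the convex combination $\mathbb{E}\hat x_N=\tfrac{1}{A_N}\sum_k\alpha_k\bar x_k$, yields
\begin{equation*}
\mathbb{E}\widetilde\Psi_N^{*}\le\tfrac{\beta_N}{2}\|\lambda\|_2^{2}-A_N f(\mathbb{E}\hat x_N)+A_N\langle b-A\mathbb{E}\hat x_N,\lambda\rangle.
\end{equation*}
Substituting into the previous display and dividing by $A_N$ produces the master estimate
\begin{equation*}
f(\mathbb{E}\hat x_N)+\mathbb{E}\varphi(\eta_N)+\langle A\mathbb{E}\hat x_N-b,\lambda\rangle\le\frac{\beta_N\|\lambda\|_2^{2}}{2A_N}+\frac{1}{A_N}\sum_{k=0}^{N}\frac{A_k}{\beta_k-L}\sigma_k^{2}\quad(\forall\lambda).
\end{equation*}

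Both claims then follow by appropriate choices of $\lambda$ combined with the two standard duality relations $\mathbb{E}\varphi(\eta_N)\ge-f^{*}$ (weak duality) and $f(\mathbb{E}\hat x_N)-f^{*}\ge-R\|A\mathbb{E}\hat x_N-b\|_2$ (Lagrangian/Fenchel lower bound, valid since $\|\lambda^{*}\|_2\le R$): for~\eqref{eq:APDSGD prec cert} take $\lambda=\lambda^{*}$ (so that $\|\lambda\|_2\le R$); for~\eqref{eq:APDSGD prec cert 2} take $\lambda$ in the direction of $A\mathbb{E}\hat x_N-b$ with norm $R$. The main technical obstacle is the second step: two independent layers of randomness $(\xi_k,\tilde\xi_k)$ at each iteration must be handled by conditioning in the correct order, and the two Jensen inequalities on the convex $f$ — one inside the iteration sum, one on the primal average — must both go in the useful direction. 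Once this bookkeeping is settled, the rates claimed in Theorem~\ref{th:conv general alg} are obtained by plugging each of the two prescribed $(\alpha_k,\beta_k,A_k)$-sequences into the variance term on the right-hand side of the master estimate.
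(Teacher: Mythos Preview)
Your plan matches the paper's proof almost step for step: take the full expectation of \eqref{ineq:prec_cert}, use the tower property and unbiasedness to eliminate the two middle sums, bound $\E\widetilde\Psi_N^*$ via Lemma~\ref{lm:primaldualineqforlinearpart} and two Jensen steps, and finish with weak duality and the Lagrangian lower bound. The only structural difference is packaging: the paper first restricts the minimum in $\widetilde\Psi_N^*$ to the ball $\Lambda_R=\{\|\lambda\|_2\le 2R\}$ and minimizes the linear part explicitly, whereas you keep a master inequality valid for every $\lambda$ and specialize afterwards. These are equivalent.

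There is, however, a small but genuine gap in your last paragraph: the specific test points you pick do not deliver the stated bounds. For the constraint violation \eqref{eq:APDSGD prec cert 2}, plugging $\|\lambda\|_2=R$ into the master estimate and combining with $\E\varphi(\eta_N)\ge -f^*$ and $f(\E\hat x_N)-f^*\ge -R\|A\E\hat x_N-b\|_2$ cancels the left-hand side entirely and leaves only $0\le\text{RHS}$. You must choose $\|\lambda\|_2>R$; this is exactly why the paper works on the ball of radius $2R$, so that after subtracting the Lagrangian lower bound one still retains a positive multiple $R\|A\E\hat x_N-b\|_2$ on the left. For the primal gap \eqref{eq:APDSGD prec cert}, the choice $\lambda=\lambda^*$ leaves the term $\langle A\E\hat x_N-b,\lambda^*\rangle$, whose sign you cannot control, so it cannot simply be dropped. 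The quickest fix is $\lambda=0$: then the master estimate reads $f(\E\hat x_N)+\E\varphi(\eta_N)\le A_N^{-1}\sum_k A_k\sigma_k^2/(\beta_k-L)$, and weak duality immediately gives a bound at least as strong as \eqref{eq:APDSGD prec cert}. With these two adjustments your argument is complete and coincides with the paper's.
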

  \begin{proof}
    The dualization result relies on the weak duality and convexity of the considered problem.
 Taking the expectation in the inequality~\eqref{ineq:prec_cert} in Theorem~\ref{th:prec_cert} for the unbiased stochastic
 approximation cancels out certain terms and we obtain
 \begin{equation}
   \label{eq:phi iter ineq}
  A_{N} \vp (\eta_{N}) \<= \E \wt{\Psi}_{N}^{*}+   \suml{k=0}^{N} \dfrac{A_{k}}{(\beta_{k}- L )} \E \|\tgrad \Phi_{k}(\lambda_{k},\xi_{k},\wdxi_{k}) -\nabla \vp (\lambda_{k}) \|_{2}^{2}.
 \end{equation}
  Let us first estimate $\E \wtPsi_{N}^{*}$.
We introduce the feasible set $\Lambda_{R}:= \{ \lambda \in H^{*}: \|\lambda\|_{2} \<= 2R \}$. Then,

  \begin{align}
    \label{eq:7}
\E   \minl{\lambda \in \Lambda} \wtPsi_{N}(\lambda) &=
\E  \Big[ \minl{\lambda \in \Lambda}
    d(\lambda)  +  \suml{k=0}^{N} \alpha_{k} ( \vp(\lambda_{k})  +  \la \tgrad \Phi (\lambda_{k} , \xi_{k}, \wdxi_{k}) , \lambda - \lambda_{k} \ra ) \Big] \notag \\ \notag
    &\<=    \minl{\lambda \in \Lambda_{R}}
      \beta_{N}d(\lambda)  + \E \Big[  \suml{k=0}^{N} \alpha_{k} ( \vp (\lambda_{k}) + \la \tgrad \Phi (\lambda_{k} , \xi_{k}, \wdxi_{k}) , \lambda - \lambda_{k} \ra ) \Big] \\
                                            &=
     \minl{ \lambda \in \Lambda_{R} } \beta_{N}d(\lambda)  +  \suml{k=0}^{N} \alpha_{k} \E \Big[ ( \vp (\lambda_{k}) + \la \tgrad \Phi (\lambda_{k} , \xi_{k}, \wdxi_{k}) , \lambda - \lambda_{k} \ra ) \Big]
  \end{align}
  Applying Lemma \ref{lm:primaldualineqforlinearpart} above and using $\E_{\xi_k,\wdxi_k} \tgrad \Phi (\lambda_{k} , \xi_{k}, \wdxi_{k}) =\E_{\xi_k} \nabla \Phi (\lambda_{k} , \xi_{k}) $, we get:
  
  \begin{align*}
& \minl{\lambda \in \Lambda_{R}}   \suml{k=0}^{N} \alpha_{k} \E \Big[ ( \vp (\lambda_{k}) + \la \tgrad \Phi (\lambda_{k} , \xi_{k}, \wdxi_{k}) , \lambda - \lambda_{k} \ra ) \Big]  \\
& =\minl{\lambda \in \Lambda_R}   \suml{k=0}^{N} \alpha_{k}  \E \Big[ ( \vp (\lambda_{k}) + \la \nabla \Phi (\lambda_{k} , \xi_{k}) , \lambda - \lambda_{k} \ra ) \Big] \\ & \<=
 \minl{\lambda \in \Lambda_R} \left\{ \sum_{k=0}^N \alpha_{k} (-f(\E x(-A^T\lambda_k,\xi_k)) +  \la b - A\E  x(-A^T\lambda_k,\xi_k), \lambda\ra ) \right\}  \notag \\
&\leq
A_N  \minl{\lambda \in \Lambda_{R} } \left\{ -f(\E \hat{x}_N) +  \la b - A\E \hat{x}_N, \lambda\ra \right\} \\ &= -A_N f(\E \hat{x}_N) + A_N \min_{\lambda \in \Lambda_R} \la b - A\E \hat{x}_N, \lambda\ra \notag \\
    &= -A_N f(\E \hat{x}_N) +   A_N \min_{\lambda \in \Lambda_R} \la b - A\E \hat{x}_N,
        \dfrac{A\E \hat{x}_N - b}{\| A\E \hat{x}_N - b \|    } \lambda\ra \notag \\
  &=    -A_N f(\E \hat{x}_N)    - 2 A_N R\| b - A\E  \hat{x}_N \|_2.
  \end{align*}
  In the inequalities above, we also used the convexity of $f$, the property of $A_{N}$ being a linear combination $A_{N}= \suml{k=0}^{N} \alpha_{k}$, and definitions of $\hat{x}_{N}$ and $\Lambda_{R}$.
  
  Thus, $\E \wtPsi^{*}_{N}$ is bounded by
  \begin{equation}
    \label{ineq:Psi*bound}
    \E \wtPsi^{*}_{N} \<=  \beta_{N}d(\lambda) - A_N f(\E \hat{x}_N)    - 2 A_N R\| b - A\E  \hat{x}_N \|_2.
  \end{equation}
  
  Moreover, using \eqref{eq:phi iter ineq}, and  \eqref{ineq:Psi*bound}, the fact that $d(\lambda ) \<= R^{2}/2$, and the unbiasedness of the stochastic approximation, we get
  the following bound on the
  iteration term $A_{N} \vp(\eta_{N} )$:
  \begin{align}
    \label{ineq:ineq_dual_iterate}
    A_{N} \E \vp(\eta_{N} ) \<=
    \beta_{N}R^{2}/2 &- A_N f(\E \hat{x}_N)    - 2 A_N R\| b - A\E  \hat{x}_N \|_2 +
     \suml{k=0}^{N} \dfrac{A_{k}}{(\beta_{k}- L )}  \E \| \tgrad \Phi (\lambda_{k} , \xi_{k}, \wdxi_{k})-\nabla \vp (\lambda_{k}) \|_{2}^{2}.
  \end{align}
  
Now, let us estimate the convergence of the primal objective $f(\E \hat{x}_{N})$. 

Using the weak duality $-f(x^{*}) \<= \vp (\lm^{*})$ we obtain:
\begin{align}\label{eq:func_math_fin}
	f(\E\hat{x}_N)  - f(x^*) \leq f(\E\hat{x}_N) + \vp(\eta^*) \leq f(\E\hat{x}_N) + \E \vp(\eta_N)
\end{align}
Dividing all terms by $A_{N}$ in inequality \ref{ineq:ineq_dual_iterate} and using it we obtain
\begin{align}\label{eq:conv_rate_primal}
   f(\E\hat{x}_N)  - f(x^*) &\leq f(\E\hat{x}_N) + \E \vp(\eta_N) \\
  &\<=
     \dfr{ \beta_{N}R^{2}}{2 A_{N}} - 2 R\| b - A\E \hat{x}_N \|_2 + \dfrac{1}{A_{N}} \suml{k=0}^{N} \dfrac{A_{k}}{(\beta_{k}- L )} \E \|\tgrad \Phi (\lambda_{k} , \xi_{k}, \wdxi_{k}) -\nabla \vp (\lambda_{k}) \|_{2}^{2} \\
  &=
     \dfr{ \beta_{N}R^{2}}{2 A_{N}}  + \dfrac{1}{A_{N}} \suml{k=0}^{N} \dfrac{A_{k}}{(\beta_{k}- L )}  \E \|\tgrad \Phi (\lambda_{k} , \xi_{k}, \wdxi_{k}) -\nabla \vp (\lambda_{k}) \|_{2}^{2}.
\end{align}

Regarding the convergence of the argument $\| A \E \hat{x}_{N} - b \|_{2}$, we know
\begin{align}
  \label{ineq:estimate_for_arg_conv}
  2 R\| b - A\E \hat{x}_N \|_2   \leq & f(x^{*}) - f(\E\hat{x}_N) +  \dfr{ \beta_{N}R^{2}}{2 A_{N}} + \dfrac{1}{A_{N}} \suml{k=0}^{N} \dfrac{A_{k}}{(\beta_{k}- L )} \E \| \tgrad \Phi (\lambda_{k} , \xi_{k}, \wdxi_{k}) -\nabla \vp (\lambda_{k}) \|_{2}^{2}.
  \end{align}

Therefore, we have for any $x\in Q$, $f(x^{*}) \<= f(x) + \la \lambda^{*}, Ax -b \ra $. Using the bound $\| \lambda^{* } \|_{2} \<= R$,
and choosing $x = \E \hat{x}_{N}$ translates to
$f(x^{*}) \<= f(\E \hat{x}_{N}) + \la \lambda^{*}, A\E \hat{x}_{N} -b \ra  \<=
f(\E \hat{x}_{N}) + R \| A\E \hat{x}_{N} -b \|_{2} $. Finally, using this estimate in the inequality
\eqref{ineq:estimate_for_arg_conv} gives us:
\begin{align}
  \label{eq:10}
  2 R\| b - A\E \hat{x}_N \|_2  & \leq  R\|A \E \hat{x}_{N} - b\|_{2} + \dfr{ \beta_{N}R^{2}}{2 A_{N}} + \dfrac{1}{A_{N}} \suml{k=0}^{N} \dfrac{A_{k}}{(\beta_{k}- L )} \E \| \tgrad \Phi (\lambda_{k} , \xi_{k}, \wdxi_{k}) - \nabla \vp (\lambda_{k}) \|_{2}^{2},
\end{align}
and the desired result follows.

    \end{proof}
Using the techniques developed in \cite{dvurechensky2018parallel,dvinskikh2019primal}, these convergence rate guarantees can be extended to the bounds in terms of probability of large deviations.

We next analyze Case A) of Algorithm~\ref{Alg:APDSGD} and prove Theorem \ref{th:conv general alg} case A).
\begin{theorem}[Proof of Theorem \ref{th:conv general alg} case A] \label{th:agd_dec_var_conv_rate}
Select Case A in Algorithm~\ref{Alg:APDSGD} by choosing $A_{k }= \dfr{1}{2}  (k+1)(k+2)$, $\alpha_k =  \dfr{k+1}{2}$ with
$\beta_{k}=  2L$ .
Assume that the variance $\sigma_k$ satisfies $\sigma_k^2 \<= \dfr{\e L \alpha_k}{A_k}$.
 Then, Algorithm~\ref{Alg:APDSGD} gives an $\e$-solution after $  \sqrt{\dfr{8LR^{2}}{\e}}  $ iterations. 
\end{theorem}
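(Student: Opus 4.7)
The plan is to apply Theorem~\ref{th:agd_conv_rate_primdual} to Algorithm~\ref{Alg:APDSGD} with the Case A coefficients and show that both the primal suboptimality and the constraint violation drop below $\e$ in $N = \sqrt{8LR^2/\e}$ iterations. That theorem gives the two bounds
\begin{align*}
f(\E\hat{x}_N) - f^* &\leq \frac{\beta_N R^2}{2 A_N} + \frac{1}{A_N}\sum_{k=0}^N \frac{A_k}{\beta_k - L}\sigma_k^2, \\
\|A\E\hat{x}_N - b\|_2 &\leq \frac{\beta_N R}{2 A_N} + \frac{1}{R A_N}\sum_{k=0}^N \frac{A_k}{\beta_k - L}\sigma_k^2,
\end{align*}
so the task reduces to controlling these two pieces: a deterministic ``acceleration'' term and an accumulated stochastic-error term.

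First I would handle the deterministic term. Substituting $\beta_N = 2L$ and $A_N = (N+1)(N+2)/2$, one has $\beta_N R^2 / (2 A_N) = 2 L R^2/((N+1)(N+2)) \leq 2LR^2/N^2$. Choosing $N \geq \sqrt{8LR^2/\e}$ makes this at most $\e/4$, and analogously $\beta_N R/(2A_N) \leq \e/(4R)$. Next I would use $\beta_k - L = L$ and the variance hypothesis $\sigma_k^2 \leq \e L \alpha_k / A_k$ to get
\[
\frac{A_k}{\beta_k - L}\sigma_k^2 \leq \frac{A_k}{L}\cdot\frac{\e L \alpha_k}{A_k} = \e\,\alpha_k.
\]
Summing and using $\sum_{k=0}^N \alpha_k = \sum_{k=0}^N (k+1)/2 = (N+1)(N+2)/4 = A_N/2$ yields
\[
\frac{1}{A_N}\sum_{k=0}^N \frac{A_k}{\beta_k - L}\sigma_k^2 \;\leq\; \frac{\e}{A_N}\cdot\frac{A_N}{2} \;=\; \frac{\e}{2},
\]
and the corresponding bound for the feasibility residual is $\e/(2R)$.

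Adding the two contributions gives $f(\E\hat{x}_N) - f^* \leq \e/4 + \e/2 < \e$ and $\|A\E\hat{x}_N - b\|_2 \leq \e/(4R) + \e/(2R) < \e/R$, which matches the definition of $\e$-solution in~\eqref{eq:eps_sol_def}. So after $N = \sqrt{8LR^2/\e}$ iterations Algorithm~\ref{Alg:APDSGD} returns an $\e$-solution of \eqref{eq:primal_problem}, as claimed. There is no real obstacle beyond these constant-chasing computations; the nontrivial work was done in the proof of Theorem~\ref{th:agd_conv_rate_primdual}, and Case A is arranged precisely so that the variance schedule cancels the $A_k/L$ factor, reducing the stochastic term to a telescoping sum of $\alpha_k$'s.
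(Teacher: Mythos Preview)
Your proposal is correct and follows essentially the same route as the paper: invoke Theorem~\ref{th:agd_conv_rate_primdual}, plug in the Case~A coefficients, observe that $\beta_k-L=L$ cancels with the variance schedule so that $\tfrac{A_k}{\beta_k-L}\sigma_k^2\le \e\,\alpha_k$, and sum $\sum_{k=0}^N\alpha_k=(N+1)(N+2)/4$ to get the stochastic contribution $\e/2$, while the deterministic term $\beta_N R^2/(2A_N)=2LR^2/((N{+}1)(N{+}2))$ falls below $\e/2$ for $N\ge\sqrt{8LR^2/\e}$. The paper carries out exactly this computation for the primal-gap bound~\eqref{eq:APDSGD prec cert} and then simply notes that the constraint violation is handled identically via~\eqref{eq:APDSGD prec cert 2}; you wrote out both, which is a minor presentational difference only.
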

\begin{proof}
By \eqref{eq:APDSGD prec cert}, we obtain
\begin{align}
  \label{eq:13}
   f(\mathbb{E}\hat{x}_N)-f^*
  &=   \vp(\eta_{N}) - \vp(\lambda^{*}) \<= \dfr{\beta_{N} R^{2}}{2 A_{N}} + \dfr{1}{A_{N}} \suml{k=0}^{N}\dfr{A_{k}}{\beta_{k}- L} \sigma_{k}^{2}  \notag \\ \notag
 &\<= \dfr{4LR^{2}}{ (N+1)(N+2)} + \dfr{4\e}{ L (N+1)(N+2) } \suml{k=0}^{N}\dfr{A_{k}}{ L} \dfr{\e L \alpha_k}{A_k} \\
 &=
 \dfr{4LR^{2}}{ (N+1)(N+2)} + \dfr{4\e}{ L (N+1)(N+2) } \suml{k=0}^{N} L \dfr{k+1}{4} \notag \\ \notag
 &=
\dfr{4LR^{2}}{ (N+1)(N+2)} + \dfr{2\e }{   (N+1)(N+2) } \dfr{(N+1)(N+2)}{4}  \notag \\
 &=
 \dfr{4LR^{2}}{ (N+1)(N+2)} + \dfr{\e }{2}.
\end{align}
 We can see that after $N=  \sqrt{\dfr{8LR^{2}}{\e}}$ the r.h.s. becomes smaller than $ \dfr{\e}{2}$. The bound for $\|A\E\hat{x}_N-b\|_2$ is obtained in the same way using \eqref{eq:APDSGD prec cert 2}.

%
\end{proof}

We next analyze Case B) of Algorithm~\ref{Alg:APDSGD} and prove Theorem \ref{th:conv general alg} case B).

\begin{theorem}[Proof of Theorem \ref{th:conv general alg} case B]
~\label{th:agd_const_var_conv_rate}
  Assume that we have some constant variance $\sigma_{k}^{2 } = \sigma^{2} $.
  Given $c\>= 1$, $a=2^{c}$, $b>0$ let the coefficients be defined by
  $\alpha_{k} = \dfrac{k+1}{2^{c}}$ and $\beta_{k}= L + \dfrac{\sigma}{R} b (k+2)^{c}$,
  $A_{k}= \suml{\ell=0}^{k} \alpha_{\ell} = \dfr{1}{2a}(k+1)(k+2)$.
  Select case B of Algorithm~\ref{Alg:APDSGD} by setting $c=3/2$ with the resulting step sizes
  $\alpha_{k}= \dfr{k+1}{2\sqrt{2}}$ and $\beta_{k}= L +\dfrac{\sigma}{2^{1/4}\sqrt{3} R} (k+2)^{3/2}$.
  Then Algorithm~\ref{Alg:APDSGD} gives an $\e$-solution after
$  \sqrt{\dfr{4 LR^{2}}{\e}} \bigvee \dfr{9 \sigma^{2} R^{2}}{\e^{2}}$ iterations.
\end{theorem}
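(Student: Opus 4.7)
My plan is to specialize the general bound \eqref{eq:APDSGD prec cert} of Theorem~\ref{th:agd_conv_rate_primdual} to the coefficient sequences prescribed here with constant variance $\sigma_k^2 = \sigma^2$. Before invoking that theorem I would verify the coupling condition \eqref{eq:condition on the coefficients AGD}: with $\alpha_k = (k+1)/a$ and $A_k = (k+1)(k+2)/(2a)$ it reduces to $\beta_k/\beta_{k-1} \leq a(k+2)/(2(k+1))$, i.e.\ (after cancellation) $((k+2)/(k+1))^{c-1} \leq 2^{c-1}$, which is immediate from $(k+2)/(k+1) \leq 2$. Theorem~\ref{th:agd_conv_rate_primdual} then gives
\[
f(\E \hat{x}_N) - f^* \leq \frac{\beta_N R^2}{2 A_N} + \frac{\sigma^2}{A_N}\sum_{k=0}^N \frac{A_k}{\beta_k - L}.
\]

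I would then substitute the explicit expressions. The deterministic term splits as
\[
\frac{\beta_N R^2}{2 A_N} = \frac{aLR^2}{(N+1)(N+2)} + \frac{a \sigma R b\,(N+2)^{c-1}}{N+1}.
\]
Using $\beta_k - L = (\sigma b/R)(k+2)^c$ turns the noise term into $\frac{\sigma R}{b(N+1)(N+2)}\sum_{k=0}^N (k+1)(k+2)^{1-c}$; an integral comparison bounds this sum by $\tfrac{1}{3-c}(N+2)^{3-c}$, producing a contribution of the form $C(c)\,\sigma R/(b\,(N+2)^{c-1})$. Specializing to $c=3/2$, both $\sigma R$-scaled terms decay like $1/\sqrt{N+2}$ and combine into $(C_1 b + C_2/b)\,\sigma R/\sqrt{N+2}$ with explicit constants $C_1, C_2$; minimizing over $b$ via AM--GM selects $b = \sqrt{C_2/C_1}$, which with $a = 2\sqrt{2}$ evaluates to exactly the prescribed $b = 1/(2^{1/4}\sqrt 3)$. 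The purely deterministic residual $aLR^2/((N+1)(N+2))$ falls below $\e/2$ once $N \geq \sqrt{4LR^2/\e}$, and the optimized stochastic piece falls below $\e/2$ once $N \geq 9\sigma^2 R^2/\e^2$; taking the maximum of the two thresholds gives the stated iteration count. The feasibility bound $\|A\E\hat{x}_N - b\|_2 \leq \e/R$ follows by applying the same arithmetic to \eqref{eq:APDSGD prec cert 2}.

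The main technical obstacle is constant-chasing: the $c$-dependent factor from the integral bound on $\sum (k+1)(k+2)^{1-c}$ must mesh with the $(\beta_k - L)$ denominator so that the AM--GM-optimal $b$ reproduces the numerical values $2^{1/4}\sqrt 3$ and $9$ claimed. A secondary care point is handling low-order indices $k$ in the sum without inflating constants, since for small $N$ the integral upper bound on the sum can be loose and would need to be replaced by a direct finite-sum estimate. Both are routine bookkeeping beyond the conceptual steps already used in the Case A proof.
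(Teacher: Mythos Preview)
Your proposal is correct and follows essentially the same route as the paper: invoke the bound \eqref{eq:APDSGD prec cert} from Theorem~\ref{th:agd_conv_rate_primdual}, substitute the Case~B coefficients, control the sum $\sum_k A_k/(\beta_k-L)$ by an integral, and read off the $\Theta(LR^2/N^2+\sigma R/\sqrt{N})$ rate together with the two thresholds on $N$. The only difference is cosmetic: the paper plugs in $c=3/2$ and the given $b$ from the start and works purely numerically, whereas you additionally verify the coupling condition and recover $b=1/(2^{1/4}\sqrt{3})$ by an AM--GM optimization---extra detail, but the same argument.
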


\begin{proof}

  Using \eqref{eq:APDSGD prec cert}, we get
  \begin{align}
    \label{eq:conv_rate_agd_const_batch}
   f(\mathbb{E}\hat{x}_N)-f^* & \<=
    \dfr{\beta_{N} R^{2}}{2 A_{N}}  +  \dfr{1}{A_{N}} \suml{k=0}^{N}\dfr{A_{k}}{\beta_{k}- L} \sigma_{k}^{2}  \notag \\
   & \<=
     \dfr{2^{3/2}LR^{2}}{(N+1)(N+2)} + \dfr{2^{9/4}(N+3)^{3/2} \sigma R}{\sqrt{3}(N+1)(N+2)} \notag \\
    & =
   \dfr{4LR^{2}}{N^{2}} + \dfr{\sqrt{3}(3^{1/3} N)^{3/2} \sigma R}{N^{2}} \notag \\
                                                                 & =
   \dfr{4LR^{2}}{N^{2}} + \dfr{ 3  \sigma R}{ N^{1/2}  } \notag \\
  &=    \Theta \Big(  \dfrac{LR^{2}}{N^{2}} + \dfrac{\sigma R}{\sqrt{N}} \Big).
  \end{align}
By the choice of the number of iterations $ N=  \sqrt{\dfr{4 LR^{2}}{\e}} \bigvee \dfr{9 \sigma^{2} R^{2}}{\e^{2}}$, we obtain that the r.h.s. is smaller than $\varepsilon$. The bound for $\|A\E\hat{x}_N-b\|_2$ is obtained in the same way using \eqref{eq:APDSGD prec cert 2}.

\end{proof}

\section{Proof of Theorem~\ref{th:SAGD increasing batch} }

\begin{proof}
 For the Wasserstein barycenter problem, the dual objective function is  $\vp (\Blm )= \cW^{*}_{\gamma}( \Blm )$, the primal objective is $f( p ) =
    \frac{1}{m}\sum\limits_{i=1}^{m}  \W_{\gamma, \mu_i}(p_i) $, and the  linear operator is $A = \sqrt{W}$.
We set the approximate gradient $\tgrad \Phi(\Blm , Y_{1}, \dotso, Y_{M_{1}}, \xi_{1}, \dotso , \xi_{M_{2}}) := \widetilde{\nabla} \W_{\gamma}^*( \Blm, Y_{1}, \dotso, Y_{M_{1}}, \xi_{1}, \dotso , \xi_{M_{2}})$.
With this setting we can apply the Theorem~\ref{th:agd_dec_var_conv_rate}.
We set $ \gamma = \e / (4 \ln ( \Omega ))$ and the weights to $\omega_l =\dfr{1}{m}$.
According to Theorem~\ref{th:agd_dec_var_conv_rate} the variance has to satisfy the condition
\begin{equation}
\label{eq:var_bound}
\sigma_k^2 \<= \dfr{\e L \alpha_k}{A_k} =  \dfr{\e m\eigmax  }{  (k +2 ) \gamma }  =
\dfr{   \eigmax  4 m   \ln (\Omega )}{ (k+2) },
\end{equation}
where we used also Lemma \ref{Lm:dual_obj_properties2}.
According to Lemma \ref{lm:variance_stoch_gradient} the total variance is bounded by
\begin{align}
  \E \|\widetilde{\nabla} \W_{\gamma}^*(& \Blm)   - \nabla \W_{\gamma}^*(\Blm)\|_2^2 \leq
2  \lambda_{\max}(W)   \sum_{i=1}^m \Big( \dfr{1}{M_{i,1}} + \dfr{1}{M_{i,2}} \Big)
 , \; \Blm \in \R^{mn}.
\end{align}
Setting the batch sizes $M_{k} =  1 \bigvee\Big \lceil  \dfr{  k+ 2}{ \ln(\Omega) }   \Big  \rceil \<=
1 \bigvee\Big \lceil  \dfr{  k}{ \ln(\Omega) }  +1 \Big \rceil $ which is the Case A of Algorithm~\ref{Alg:mainWB} satisfies the condition
\eqref{eq:var_bound}, and thus we can apply Theorem~\ref{th:agd_dec_var_conv_rate}.
Since the decreasing variance condition is satisfied
the algorithm converges after $N =  \sqrt{\dfr{8LR^{2}}{\e}}$ iterations
according to Theorem~\ref{th:agd_dec_var_conv_rate}. Using the estimate
$R \<= \sqrt{ \dfr{2n }{m  \lambda_{\min}^+ (W)} }$
from~\eqref{eq:estimate of R}, and
$L= m \lambda_{\max}(W) /
\gamma $ from Lemma~\ref{Lm:dual_obj_properties2} we have that Algorithm~\ref{Alg:mainWB} converges after
$  \tO \Big(  \dfr{  \sqrt{\chi (\overline{W}) n}}{\e} \Big)$ iterations.

Moreover, the total number of samples is
\begin{align}\label{eq:number of samples AGD semi discrete decr}
m  \sum_{k=1}^{N}    M_k  &\<=
m \suml{k=1}^{N} k+2
=
m \Big( \dfr{N (N+1)}{2} +2N \Big) \notag
\\ &= \tO \l(\chi (\overline{W}) m N^{2}\r) = \tO \l( \dfr{\chi (\overline{W}) m n}{\e^{2}}  \r) .
\end{align}

Let us compute the
computational complexity for a particular node $i$.
On each iteration we have to batch $M_{k}$ samples. Sampling individual vector sample
$p_i(\blm_i, Y_r^i)$ costs $n$ with
$ r = 1 , \dotso, M_{k}$. Therefor the overall sampling cost is $M_{k}n$ at the $k$-th iteration.

At iteration $k$ the cost of summation of individual gradients
(Equations~\eqref{eq:semi-discr agd summation of the gradient 1} and~\eqref{eq:semi-discr agd summation of the gradient 2} in Algorithm~\ref{Alg:mainWB})
is $\nnz M_{k}$.

Obtaining realizations costs $n$. Other operations cost $O(n)$, thus the computational complexity is $\tO ( M_{k} n ) $ at each iteration.
Summing the complexity up over the iterations results in
\begin{align}
   \sum_{k=1}^{N}  (  n \log_2 n +  M_k (n  + \nnz)  ) &=
  N n \log_2 n  + (n + \nnz )\suml{k=1}^{N} \dfr{k+2}{\ln(\Omega)}
                                                       \notag   \\
&\<=
  N n \log_2 n  + (n + \nnz )\suml{k=1}^{N} k+2
\notag \\
 & =
   N n\log_2 n
    +
    (n + \nnz ) \Big( \dfr{N (N+1)}{2} +2N \Big)
\end{align}
computational complexity per node. Summing this up over all the nodes results in
\begin{align}
& \s{i}{1}{m} N n\log_2 n
  +  (n + \nnz ) \Big( \dfr{N (N+1)}{2} +2N \Big) \notag \\
  &=
m N n\log_2 n
    +  (m n +
 \s{i}{1}{m} \nnz
      ) \Big( \dfr{N (N+1)}{2} +2N \Big) \notag \\
  &=
    \tO \l( m n N^{2}  \bigvee   \s{i}{1}{m} \nnz N^{2} \r) = \tO \l( \dfr{\chi (\overline{W}) m n^{2}}{\e^{2}} \bigvee
\dfr{\chi (\overline{W}) n}{\e^{2}} \kappa (W)
    \r).
\end{align}
\par
Compared to the non-quantized version the number of bit communications per node per round for the quantized version is now $\nnz(M_{k} \bigwedge n )$ instead of $\nnz \cdot n$ because we send $M_k$ coordinates and at most the full dimensional vector.
Summing this
over the number of iterations and over the nodes gives us
\begin{align}  \s{i}{1}{m} & \sum_{k=1}^{N}  \nnz ( n \wedge  M_{k} )   \<=
  \s{i}{1}{m} \nnz \s{k}{1}{N}   n \bigwedge \s{k}{1}{N} M_{k}   =
  \tO\l(
\sqrt{\chi (\overline{W})} \kappa( \overline{W} ) \l(
     \dfrac{ n^{3/2} }{\e} \bigwedge   \dfr{n   }{\e^{2}} \r)
  \r) 
\end{align}
total number of sent communication bits. \newline

\end{proof}

\begin{remark}
  The complexity bounds for the expander graph are easily derived from the general results of
Theorem~\ref{th:agd_dec_var_conv_rate} and~\ref{th:agd_const_var_conv_rate} using the equality
$ \s{i}{1}{m} \nnz = \tO(m)$.
\end{remark}

\section{Proof of Theorem~\ref{th:SAGD constant batch}}
\label{sec:proof case B WB}
\begin{proof}
We choose the dual, primal objective and approximate gradient as in proof of Theorem \ref{th:SAGD increasing batch}.
According to Theorem~\ref{th:agd_const_var_conv_rate}, Algorithm~\ref{Alg:mainWB} case B) converges after
$ N \>=  \sqrt{\dfr{4 LR^{2}}{\e}} \bigvee \dfr{9 \sigma^{2} R^{2}}{\e^{2}}$ iterations.
We choose case B of Algorithm \ref{Alg:mainWB} with constant batch size
$M = M_{i,1}^k= M_{i,2}^{k} $.
Then, by Lemma \ref{lm:variance_stoch_gradient}
\begin{align}
\sigma^{2}\leq
2  \lambda_{\max}(W)   \sum_{i=1}^m  \dfr{2}{M} = \dfr{ 4 \eigmax m}{M}.
\end{align}
Using again the estimate for $R$ and $L$ the algorithm converges after
$ N =
\sqrt{\dfr{32 \chi (\overline{W})n}{\e}} \bigvee \dfr{72   \chi (\overline{W}) n}{ \e^{2}M} =
  \tO \Big(  \dfr{\chi(\overline{W}) n}{\e^{2}} \Big)$ iterations for a small constant batch size.

The total number of samples is
$ m  \sum_{k=1}^{N}    M  =
m N M = \tO \l( \dfr{\chi(\overline{W}) m n}{\e^{2}}  \r) . $

Sampling individually $p_i(\blm_i, Y_r^i) ,\quad r = 1 , \dotso, M$ costs $n$. Therefor the overall sampling cost is $Mn$ for each iteration.

At iteration $k$ the cost of summation of individual gradients
(Equations~\eqref{eq:semi-discr agd summation of the gradient 1} and~\eqref{eq:semi-discr agd summation
of the gradient 2} in Algorithm~\ref{Alg:mainWB})
is
$ \nnz \cdot M$.
Sampling $M$ type 2 samples costs $M n$. Afterwards the cost of sampling $M$ type 1 samples is also $Mn$.
Other operations cost $O(n)$, thus the computational complexity is $\tO ( M n ) $ at each iteration.
Summing the complexity up over the overall computational complexity per node is
\begin{align}
\sum_{k=1}^{N}    M n   =
\tO \l( \dfr{\chi(\overline{W}) n^{2} }{\e^{2}}  \r)
\end{align}
resulting in
$ \sum_{i=1}^{m}    \sum_{k=1}^{N}    M n  \cdot \nnz =
\tO \l( \dfr{\chi (\overline{W}) n^{2}  }{\e^{2}}  \r)  $ total arithmetic operations.

The number of bits sent per iteration for the quantized version is now
$ \nnz \cdot (M \wedge n)= \nnz \cdot M $ per node for $M \<= n$. Summing these estimates over the number of iterations gives us
\begin{align}
&\s{i}{1}{m}  \sum_{k=1}^{N} \nnz \cdot M   \notag \\ &=
 \s{i}{1}{m} \nnz M \l(
  \sqrt{\dfr{32 n\chi (\overline{W} )}{\e}} \bigvee \dfr{72  n \chi (\overline{W} ) }{ \e^{2}M} \r)
  =
  \tO\l(
                                                        \dfr{\kappa(\overline{W}) \chi (\overline{W} ) n }{\e^{2}}
  \r)
\end{align}
number of communication bits sent in total.
 \newline

\end{proof}

\begin{remark}
After sampling $M$ type $2$ samples which costs $Mn$ we can first create a binary tree which costs $n \log_{2} n$. Afterwards the cost of sampling of one coordinate (type $1$ samples) is just $M \log_2 n$.
\end{remark}

\section{Additional Results}

This useful lemma is used to give an estimation of the area where we have to search for the dual solution:
\begin{lemma}
  \label{lm:upper bound of the Wasserstein dual solution}
  We can upper bound the norm of the dual solution by $
  \|\lambda^{*} \|_{2}\<= R  $ with
    $R^2  =  \dfrac{  2 n\suml{l=1}^{m} w_{l}^{2}   \|C_{l} \|^2_\infty }{   \lambda_{\min}^+ (W) } $
for $\gamma $ proportional to $\e$ and $c$ is a constant close to $2$ (see~\cite{lan2017communication}
and Lemma 8 in \cite{kroshnin2019complexity}).
\end{lemma}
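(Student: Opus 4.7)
The plan is to bound $\|\lambda^*\|_2$ via the KKT conditions for the constrained primal problem \eqref{eq:DualiazationProcces} together with the spectral structure of $\sqrt{W}$. Writing the Lagrangian $\mathcal{L}(\mathtt{p},\Blm) = \frac{1}{m}\sum_{i=1}^m \mathcal{W}_{\gamma,\mu_i}(p_i) + \langle \Blm, \sqrt{W}\mathtt{p}\rangle$, the stationarity condition at a primal-dual optimum $(\mathtt{p}^*,\Blm^*)$ gives, componentwise on each block $i$,
\begin{equation*}
[\sqrt{W}\Blm^*]_i \in -\frac{1}{m}\partial \mathcal{W}_{\gamma,\mu_i}(p_i^*) + N_{S_1(n)}(p_i^*),
\end{equation*}
where $N_{S_1(n)}$ is the normal cone of the simplex. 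Because any component of $\Blm^*$ lying in $\ker\sqrt{W}$ contributes nothing to the Lagrangian, we may without loss of generality select the dual optimum $\Blm^*$ orthogonal to $\ker\sqrt{W}$, so that $\|\sqrt{W}\Blm^*\|_2^2 \geq \lambda_{\min}^+(W)\,\|\Blm^*\|_2^2$.

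Next I would bound the right-hand side. By Danskin's theorem applied to \eqref{WassDis}, the subgradient of the entropy-regularized semi-discrete Wasserstein distance $\mathcal{W}_{\gamma,\mu_i}(p_i)$ in $p_i$ is given (up to a constant from the simplex constraint) by a vector of dual Kantorovich potentials of the form $u_i$ where $\|u_i\|_\infty$ can be controlled by the cost and the regularizer. Using the explicit dual form from Lemma~\ref{lm:Lipschitz_gradient} and standard Sinkhorn-potential estimates (as in \cite{kroshnin2019complexity} and~\cite{lan2017communication}), one obtains at the optimum a coordinatewise bound of order $\|C_i\|_\infty$, so that $\|(\partial \mathcal{W}_{\gamma,\mu_i}(p_i^*))_{\perp \mathbf{1}}\|_2^2 \leq c\, n\, \|C_i\|_\infty^2$ for a constant $c$ close to $2$ (when $\gamma$ is taken proportional to $\varepsilon$, the entropic correction becomes negligible). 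Substituting the weights $\omega_i$ in place of $1/m$ for the weighted variant yields the aggregate estimate $\|\sqrt{W}\Blm^*\|_2^2 \leq 2n\sum_{l=1}^m \omega_l^2 \|C_l\|_\infty^2$.

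Combining the two inequalities,
\begin{equation*}
\lambda_{\min}^+(W)\,\|\Blm^*\|_2^2 \leq \|\sqrt{W}\Blm^*\|_2^2 \leq 2n\sum_{l=1}^m \omega_l^2\|C_l\|_\infty^2,
\end{equation*}
which rearranges to the stated bound $R^2 = \frac{2n\sum_{l=1}^m \omega_l^2\|C_l\|_\infty^2}{\lambda_{\min}^+(W)}$.

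The main obstacle is the cost-side estimate in the middle step: one needs a sharp, $\gamma$-independent control of the Sinkhorn-potential-like subgradient at the primal optimum, since a naive use of Lipschitz properties of $\mathcal{W}_{\gamma,\mu_i}$ blows up as $\gamma \to 0$. Following the argument already used in \cite{kroshnin2019complexity,lan2017communication}, this is achieved by projecting subgradients onto the hyperplane orthogonal to $\mathbf{1}$ (which is permissible because the simplex constraint absorbs multiples of $\mathbf{1}$) and exploiting the $\log$-sum-exp structure to cap the spread of the potentials by $\|C_i\|_\infty$ up to a constant $c$ close to $2$ in the regime $\gamma \propto \varepsilon$.
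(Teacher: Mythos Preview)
The paper does not actually supply a proof of this lemma: it is stated in the appendix as an auxiliary fact with pointers to \cite{lan2017communication} and Lemma~8 of \cite{kroshnin2019complexity}, and is then used only to quote the consequence $R \le \sqrt{2n/(m\,\lambda_{\min}^+(W))}$. So there is no ``paper's own proof'' to compare against beyond those citations.

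Your outline is precisely the argument those references use, so in that sense you are reconstructing the intended proof rather than giving an alternative. The two structural ingredients you isolate are the right ones: (i) choosing the dual optimizer in $(\ker\sqrt{W})^\perp$ so that the spectral inequality $\|\sqrt{W}\Blm^*\|_2^2 \ge \lambda_{\min}^+(W)\|\Blm^*\|_2^2$ holds, and (ii) bounding each block $[\sqrt{W}\Blm^*]_i$ by the subgradient of $\omega_i\,\mathcal{W}_{\gamma,\mu_i}$ at the primal optimum, modulo the simplex normal cone. Step (ii) is where the work lies, and you correctly flag that the nontrivial point is obtaining a $\gamma$-independent $\ell_\infty$ control on the (centered) Kantorovich potential; this is exactly what Lemma~8 of \cite{kroshnin2019complexity} supplies in the discrete case, and the same log-sum-exp argument carries over here to give the $\|C_i\|_\infty$ cap and hence the factor $n\,\omega_i^2\|C_i\|_\infty^2$ after passing to the $2$-norm. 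The ``constant close to $2$'' in the statement is precisely the slack from that potential bound in the regime $\gamma\propto\varepsilon$, as you note. Nothing in your sketch is wrong or missing relative to what the paper relies on.
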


We normalize the cost matrix $\| C \|_\infty^2 \<= 1$. For the weights $\omega_{i} = \dfr{1}{m} $ the estimate becomes
\begin{equation}
  \label{eq:estimate of R}
R \<= \sqrt{ \dfr{2n }{m  \lambda_{\min}^+ (W)} }.
\end{equation}

\section{Additional Simulation Results}

In this section, we show additional simulation results for the proposed Algorithm~\ref{Alg:mainWB}. In each case, we assume there is a network with $30$ nodes, and randomly generate $30$ different Gaussian distributions shown in Figure~\ref{fig:gaussians3}. Each node is assigned one of those Gaussian distributions, with uniformly sampled means and variances. Agents have the ability to query samples from those distributions, but are oblivious to the distribution itself. We test the proposed algorithm on five different graph classes: path, cycle, star, Erd\H{o}s-Renywe test three different setups for number of samples, namely, $M_1=1$ and $M_2=100$, $M_1=10$ and $M_2=10$, and $M_1=100$ and $M_2=1$. A video of the resulting barycenter estimates for the path graph can be found in \url{https://youtu.be/aSwNZvCkrCw}.

\begin{figure}[t!]
    \centering
    \includegraphics[trim=10 0 10 0,clip,width=0.9\linewidth]{../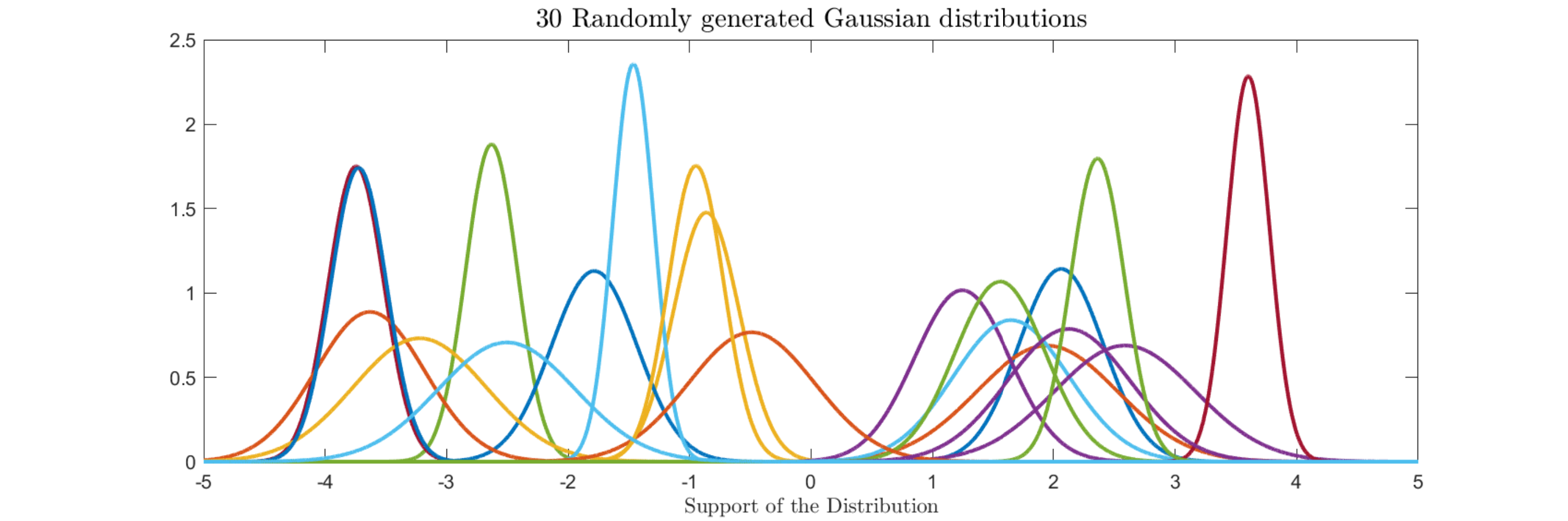}
    \caption{The randomly generated Gaussian distributions. Each distribution is assigned to each agent.}
    \vspace{-0.4cm}
    \label{fig:gaussians3}
\end{figure}

\begin{figure}[t!]
    \centering
    \includegraphics[trim=35 20 43 20,clip,width=0.8\linewidth]{../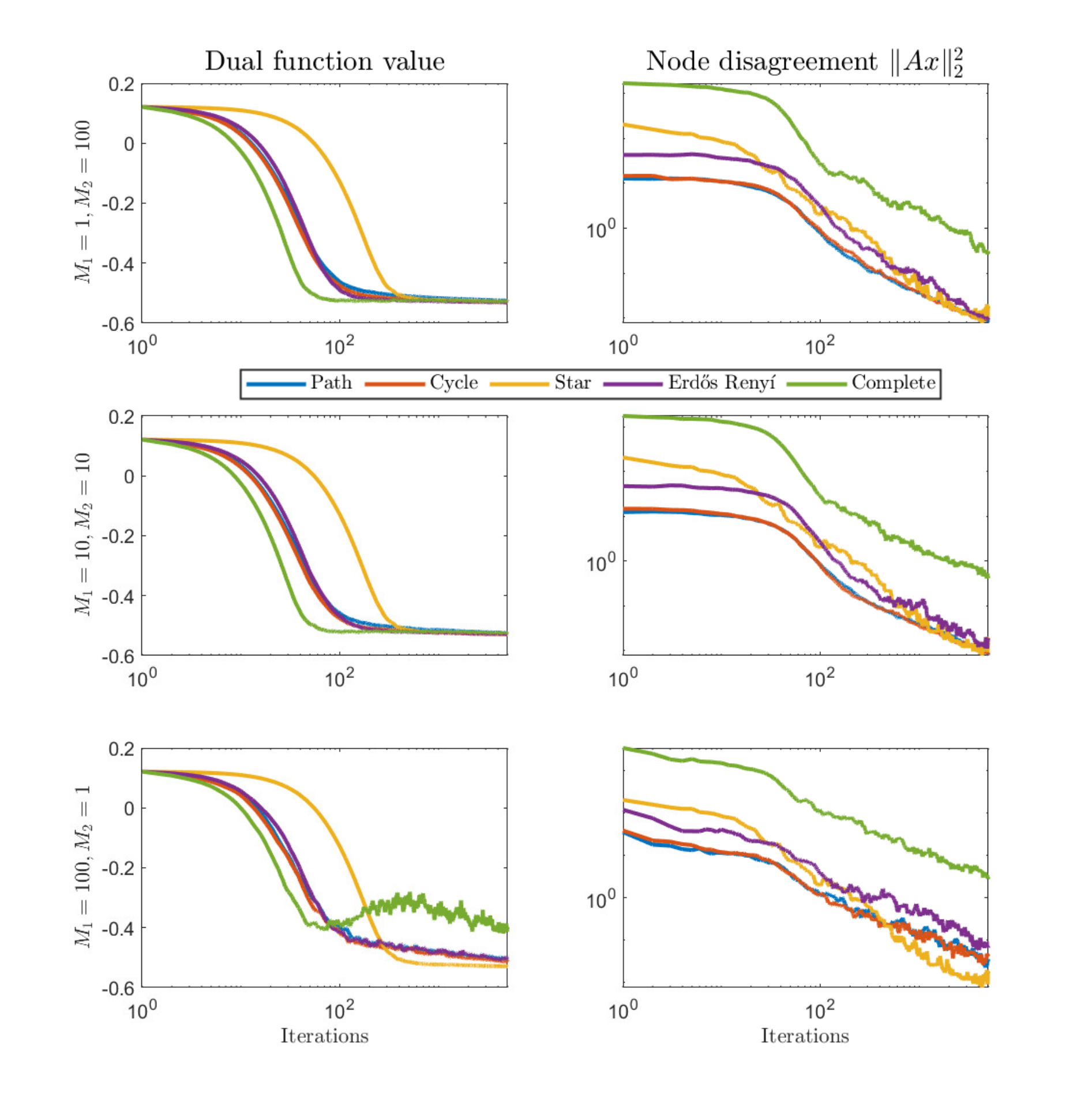}
    \caption{The dual function value and node disagreement of the iterates generated by  Algorithm~\ref{Alg:mainWB} on five different graphs, namely: path, cycle, star, Erd\H{o}s-Reny\'i, and complete graphs, with $30$ nodes each. Moreover, we show the effects of different number of samples per iterations. For each network, we test $M_1=1$ and $M_2=100$, $M_1=10$ and $M_2=10$, and $M_1=100$ and $M_2=1$.}
    \vspace{-0.4cm}
   \label{fig:additional}
\end{figure}

\end{document}